\theoremstyle{plain}
\newtheorem{thm}{\protect\theoremname}[section]
  \theoremstyle{plain}
  \newtheorem{lem}[thm]{\protect\lemmaname}
    \theoremstyle{plain}
  \newtheorem{rem}[thm]{\protect\remarkname}
      \theoremstyle{plain}
  \newtheorem{defi}[thm]{\protect\definitionname}
        \theoremstyle{plain}
  \newtheorem{prop}[thm]{\protect\propositionname}
          \theoremstyle{plain}
  \newtheorem{coro}[thm]{\protect\corollaryname}
  \providecommand{\lemmaname}{Lemma}
\providecommand{\theoremname}{Theorem}
\providecommand{\remarkname}{Remark}
\providecommand{\definitionname}{Definition}
\providecommand{\propositionname}{Proposition}
\providecommand{\corollaryname}{Corollary}
\date{}
\begin{document}

\title{A New Linear Programming Method  in Sphere Packing}
\author{Qun Mo
\thanks{School of Mathematics,
Zhejiang University, Hangzhou, 310027, China  (E-mail: moqun@zju.edu.cn).},
Jinming Wen
\thanks{Colleges of Information Science, Jinan University, Guangzhou, 510632, China (E-mail: jinming.wen@mail.mcgill.ca).},
and Yu Xia
\thanks{School of Mathematics, Hangzhou Normal University, Hangzhou, 311121, China (E-mail: yxia@hznu.edu.cn). \protect\\
This work is supported by the NSFC grant (12271215, 12326378, 11871248, 11871481, 11971427), the key project of Zhejiang Provincial Natural Science Foundation under grant number LZ23A010002 and STI2030-Major Project 2021ZD0204105. }\\}

\maketitle

\begin{abstract}
Inspired by the linear programming method developed by Cohn and Elkies (Ann. Math. 157(2): 689-714, 2003), we introduce a new linear programming method to solve the sphere packing problem.
More concretely, when dimension $n$ is fixed, the sequences of auxiliary functions $\{g_m\}_{m\in \mathbb{N}^{+}}$ is considered, where $g_m$ is a $m\Lambda$-periodic auxiliary function defined on $\mathbb{R}^n$,
with $\Lambda$ being a  given full-rank lattice in $\mathbb{R}^n$. Using this new linear programming framework, we construct several effective auxiliary functions for dimensions $n=1,2,3$. We hope this approach provides valuable insights into solving sphere packing problems for $n=2,3$ and even higher dimensions.
\end{abstract}
{\textbf{Keywords:} Sphere packing, Periodic packing, Center density, Auxiliary function, Linear programming}

\section{Introduction}
The classical  \textit{sphere packing problem} considers the densest packing of spheres in $\mathbb{R}^n$,
that is, determine the density $\Delta_n$, which is the largest proportion of $\mathbb{R}^n$ covered by a collection of identical spheres that do not intersect except along their boundaries.
It is a natural geometric problem in its own right and arises from numerous applications, such as error-correcting codes design for real world communication channels \cite{SP1,SP2,SP3}.
Determining the value of the densest packing is an old and  challenging problem and is only known for  $n\in \{1,2,3,8,24\}$.
The sphere packing problem for $n=1$ is trivial. In fact, we can take consecutive line segments that cover $\mathbb{R}$ and the packing density is $1$. This is the largest packing density for all $n\in \mathbb{N}^{+}$.
The problem for $n=2$ is already non-trivial and was proved by Thue \cite{22} in 1892.
The problem for $n=3$ is the Kepler conjecture raised in 1611 and was resolved by Hales \cite{14} with some computer-assisted proof and formally verified in 2017 \cite{15}.
On the first three dimensions, guessing the optimal packing is quite easy. Unfortunately, the low-dimensional experience is too optimistic for understanding high-dimensional sphere packing problem.

In 2017, an inspiring breakthrough is that Viazovska \cite{Viazovska} and  {Cohn  et al.} \cite{Viazovska1} solved the sphere packing problem for $n=8$ and $n=24$, respectively.
To this day, $n=8$ and $n=24$ are the only two cases beyond $n=3$ that have been solved.
The main idea to solve the sphere packing problem in $n=8$ and $n=24$ is applying the linear programming technique developed by Cohn and Elkies \cite{Cohn}.
They defined a class of function called admissible functions and proved that, if an admissible function $f:\mathbb{R}^n\rightarrow \mathbb{R}$ satisfies certain specific conditions,
then the center density $\delta_n$, which is $\frac{\Gamma(n/2+1)}{\pi^{n/2}}\Delta_n$, satisfies $\delta_n\leq \frac{f(\boldsymbol{0})}{2^n\widehat{f}(\boldsymbol{0})}$, where $\widehat{f}(\boldsymbol{0})=\int_{\mathbb{R}^n}f(\boldsymbol{x})d\boldsymbol{x}$.
On the basis of numerical evidence,  Cohn and Elkies \cite[Conjecture 7.3]{Cohn} conjectured the constructions of some optimal auxiliary functions in dimensions $2$, $8$ and $24$.
Viazovska \cite{Viazovska} and  {Cohn et al. }\cite{Viazovska1} found optimal auxiliary functions for $n=8$ and $n=24$ that match the density of the $E_8$ root lattice and Leech lattice, respectively.
However, to one's surprise, an optimal auxiliary function in $2$-dimensional case is still unknown until now.
On the other hand, the linear programming bound developed by Cohn and Elkies cannot come near the best known packing densities in $n\in \{12,16,20,28,32\}$ \cite{limit}. Cohn, de Laat and Salmon presented better upper bounds for dimensions $3\leq n\leq 16$ except for $n=8$ \cite{SDP1}.
Besides, de Laat, Filho and Vallentin  obtained new bounds for packing by extending the linear programming method to semidefinite programming method  \cite{SDP}.

Based on these considerations, we cannot help but wonder: can we extend the method developed by Cohn and Elkies to include more auxiliary functions in the linear programming approach? In this paper, we provide a positive answer to this question and  {the contribution of this paper is twofold:
(1) new linear programming framework. Theorem \ref{key1} introduces a novel linear programming framework based on $m\Lambda$-periodic auxiliary functions $\{g_m\}_{m\in \mathbb{N}^{+}}$, where $\Lambda$ is a specified full-rank lattice in $\mathbb{R}^n$. This framework is broader and more flexible than the auxiliary functions derived from the method of Cohn and Elkies, as detailed in Theorem \ref{true_set};
(2) specific constructs for optimal packing.  We construct specific functions $\{g_m\}_{m\in \mathbb{N}^{+}}$ for $n=1$ $g_2$  for  $n=2$ and $g_1$ for $n=3$ such that $\frac{g_m(\boldsymbol{0})}{\widehat{g}_m(\boldsymbol{0})}$ equal to  the corresponding optimal packing bounds, as demonstrated in Proposition \ref{thm: one_dim_auxiliary}, Proposition \ref{thm: g_22}, and Proposition \ref{prop: 3dim}, respectively. While the auxiliary function in the $1$-dimensional case may be considered trivial in some respects, it plays a crucial role in facilitating the construction of $2$-dimensional and $3$-dimensional examples.
Given that the method of Cohn and Elkies fails to achieve optimal packing density in $3$-dimensional case, and the construction of effective auxiliary functions in $2$-dimensional case remains an open problem, our examples provide valuable insights into addressing  sphere packing problem for dimensions $n=2$, $n=3$, and potentially even higher dimensions.
}

\section{Preliminary Tools}\label{pre}
In this section, we introduce some definitions and notations. First of all, we introduce the definitions respect to \textit{lattice}. For any fixed  dimension $n\in \mathbb{N}^{+}$, the lattice generated by $n$ linearly independent vectors $\boldsymbol{v}_{1},\cdots, \boldsymbol{v}_{n}\in \mathbb{R}^n$ is defined as
\[
\Lambda:=\mathcal{L}(\boldsymbol{v}_1,\ldots,\boldsymbol{v}_{n})=\left\{\sum_{i=1}^{n}k_i\boldsymbol{v}_i\ :\ k_i\in \mathbb{Z}\ \text{for}\ i=1,\ldots,n \right\}.
\]
Since the elements in $\{\boldsymbol{v}_k\}_{k=1}^n$ are linearly independent, the lattice $\Lambda$ is also called a \textit{full-rank lattice}.
For any $m\in \mathbb{N}^{+}$, take  
\[
\Lambda_m=m\Lambda=\left\{\sum_{i=1}^{n}k_i\boldsymbol{v}_i\ :\ k_i\in m\mathbb{Z}\ \text{for}\ i=1,\ldots,n \right\}.
\]The fundamental parallelepiped $\mathcal{P}(\Lambda_m)$ corresponding to  $\Lambda_m$ is defined as
\[
\mathcal{P}(\Lambda_m)=\left\{\sum_{i=1}^n x_i\boldsymbol{v}_i\ :\ 0\leq x_1,\ldots,x_{n}<m\right\}.
\]
Denote the determinant $|\Lambda_m|$ as the volume of $\mathcal{P}(\Lambda_m)$. The dual lattice $\Lambda_m^*$ of $\Lambda_m$ is defined as
\[
\Lambda_m^*=\{\boldsymbol{y}\ :\ \langle \boldsymbol{x},\boldsymbol{y}\rangle\in \mathbb{Z}\ \text{for}\ \text{all}\ {\boldsymbol{x}\in \Lambda_m}\}.
\]
 {
Furthermore, given a lattice $\Lambda_m\subset \mathbb{R}^n$, the \textit{quotient space} $\mathbb{R}^n/{\Lambda_m}$  and the \textit{quotient norm} $\|\cdot\|_{\mathbb{R}^n/{\Lambda_m}}$ are respectively  defined as
\[
{\mathbb{R}^n}/{\Lambda_m}:=\{[\boldsymbol{x}]\ :\ \boldsymbol{x}\in \mathbb{R}^n\} \qquad \text{and}\qquad \|[\boldsymbol{x}]\|_{\mathbb{R}^n/{\Lambda_m}}:=\inf\{\|\boldsymbol{u}\|_2\ :\ \boldsymbol{u}\in \boldsymbol{x}+{\Lambda_m}\},
\]
where $
[\boldsymbol{x}]=\{\boldsymbol{x}+\boldsymbol{v}\ :\ \boldsymbol{v}\in {\Lambda_m}\}.
$
For any $\boldsymbol{x}_1,\boldsymbol{x}_2\in \mathbb{R}^n$, $\boldsymbol{x}_1\sim \boldsymbol{x}_2$ if and only if $\boldsymbol{x}_1-\boldsymbol{x}_2\in \Lambda_m$. }

Now we establish the basic terminology under sphere packing problem. The packing density refers to the proportion of space that is filled by the spheres. One can prove that periodic packing is arbitrarily close to the highest packing density density \cite[Appendix A]{Cohn}. So it is enough to consider any periodic packing $\mathcal{P}$. Assume that $\mathcal{P}$ is a periodic sphere packing in $\mathbb{R}^{n}$ with disjoint union set $\cup_{\boldsymbol{x}\in C}B^n(\boldsymbol{x})$, where $B^n(\boldsymbol{x})$ is an open sphere of radius-$1$ centered at $\boldsymbol{x}\in \mathbb{R}^n$ and $C$ is the center point set. Then the density $\Delta_{\mathcal{P}}$ of $\mathcal{P}$ is
\[
\Delta_{\mathcal{P}}=\underset{m\rightarrow \infty}{{\lim}}\frac{\text{vol}(\mathcal{P}(\Lambda_m)\cap \mathcal{P})}{\text{vol}(\mathcal{P}(\Lambda_m))}.
\]
If we want to discuss the sphere packing problem  under center-points  instead of spheres, it is more convenient to consider the center density $\delta_{\mathcal{P}}$ as
\[
\delta_{\mathcal{P}}=\underset{m\rightarrow \infty}{{\lim}}\frac{|\mathcal{P}(\Lambda_m)\cap C|}{\text{vol}(\mathcal{P}(\Lambda_m))},
\]
where $C$ is the center point set defined above. 
Then the \textit{sphere packing density} $\Delta_n$ and the \textit{center density} $\delta_n$  in $\mathbb{R}^n$ are simply the highest density, that is,
\begin{equation}\label{center}
\Delta_n=\sup_{\mathcal{P}}\Delta_{\mathcal{P}}\qquad \text{and}\qquad \delta_n=\sup_{\mathcal{P}}\delta_{\mathcal{P}}\qquad \text{with}\quad \Delta_n=\frac{\pi^{n/2}}{\Gamma(n/2+1)}\delta_n.
\end{equation}

The Cohn-Elkies linear programming bound heavily relies on some good auxiliary functions.
A function $f: \mathbb{R}^n\rightarrow \mathbb{R}$ is called \textit{admissible} if there exists a constant $\delta>0$ such that $|f(\boldsymbol{x})|$ and $|\widehat{f}(\boldsymbol{x})|$ are bounded above
by a constant times $(1+|\boldsymbol{x}|)^{-n-\delta}$ \cite{Cohn1}.
When linearly independent vectors $\boldsymbol{v}_1,\ldots,\boldsymbol{v}_n\in \mathbb{R}^n$ are given, we can define  the set of auxiliary functions \text{SA},  the set of $\Lambda_m$-periodic auxiliary functions {$\text{SA}_{m}$}, and the set of sequences of some $\Lambda_m$-periodic auxiliary functions \text{SSA} as below.
\begin{defi}
\label{cohn function}
\begin{enumerate}[{1)}]
\item the set of auxiliary functions \text{SA}: $f\in \text{SA}$, if  $f: \mathbb{R}^n\rightarrow \mathbb{R}$ is  admissible and satisfies:

 (i) $f(\boldsymbol{x})\leq 0$ for $\|\boldsymbol{x}\|_2\geq 1$;

 (ii) $\widehat{f}(\boldsymbol{t})\geq 0$ for all $\boldsymbol{t}\in \mathbb{R}^n$;

  (iii) $\widehat{f}(\boldsymbol{0})>0$.

 \item  the set of $\Lambda_m$-periodic auxiliary functions {$\text{SA}_{m}$}: $g_m\in \text{SA}_{m}$, if  $g_m: \mathbb{R}^n\rightarrow \mathbb{R}$ satisfies:

  (i) $g_m(\boldsymbol{x})=g_m(\boldsymbol{x}+\boldsymbol{v})$ for any $\boldsymbol{v}\in \Lambda_m$ and $\boldsymbol{x}\in \mathbb{R}^n$;

(ii) For any $\boldsymbol{x}\in \mathbb{R}^n$, $
g_{m}(\boldsymbol{x})= \sum_{\boldsymbol{t}\in \Lambda_m^*}c_{\boldsymbol{t}}\cos(2\pi\boldsymbol{t} \bm{\cdot} \boldsymbol{x})$,
where $c_{\boldsymbol{0}}>0$, $c_{\boldsymbol{t}}\geq 0$ and { $\sum_{\boldsymbol{t}\in \Lambda_{m}^*} c_{\boldsymbol{t}}<\infty$};

(iii)   $g_{m}(\boldsymbol{x})\leq0$ for $\|[\boldsymbol{x}]\|_{\mathbb{R}^{n}/\Lambda_{m}}\geq1$.

 \item the set of  sequences of some $\Lambda_m$-periodic auxiliary functions \text{SSA}: $\{g_m\}_{m\in \mathbb{N}^{+}}\in \text{SSA}$,
  if $g_m\in \text{SA}_{m}$, for  any $m\in \mathbb{N}^{+}$.
  \end{enumerate}
\end{defi}

{The definition  of SA is almost the same as the conditions in \cite[Theorem 3.1]{Cohn}, while $\widehat{f}(\boldsymbol{0})>0$ does not directly appear in \cite{Cohn}. However, it is only for the convenience of the calculation to make the upper bound of the sphere packing not vacuous, and such kind of restriction does not drop any useful auxiliary functions.  Moreover, we define three constants related to \text{SA}, $\text{SA}_m$ and SSA. 
\begin{defi}\label{param}
For any fixed  $f\in \text{SA}$, $g_m\in \text{SA}_m$ and $\{g_m\}_{m\in \mathbb{N}^{+}}\in \text{SSA}$, denote ${f}^{\#}$, $g_m^{\#}$ and $\{g_m\}_{m\in \mathbb{N}^{+}}^{\#}$ as
\[
{f}^{\#}:=\frac{f(\boldsymbol{0})}{\widehat{f}(\boldsymbol{0})},
\qquad g_m^{\#}:=\frac{g_m(\boldsymbol{0})}{\widehat{g}_m(\boldsymbol{0})}
\qquad \text{and}\qquad \{g_m\}_{m\in \mathbb{N}^{+}}^{\#}:=\underset{m\rightarrow \infty}{\underline{\mathrm{lim}}}\frac{g_m(\boldsymbol{0})}{\widehat{g}_m(\boldsymbol{0})}.
\]
Here we take  $\widehat{f}(\boldsymbol{0})=\int_{\mathbb{R}^n}f(\boldsymbol{x})d\boldsymbol{x}$ when  $f\in \text{SA}$, and  $\widehat{g}_m(\boldsymbol{0})=\int_{\mathcal{P}(\Lambda_m)}g_m(\boldsymbol{x})d\boldsymbol{x}$ when $g_m\in \text{SA}_{m}$, respectively.
\end{defi}
\begin{rem}\label{g remark}
For any $g_m\in \text{SA}_m$, since $g_m=\sum_{\boldsymbol{t}\in \Lambda_m^*}c_{\boldsymbol{t}}\cos(2\pi \boldsymbol{t}\bm\cdot \boldsymbol{x})$, then we can directly have 
\[
\widehat{g}_m(\boldsymbol{0})=c_{\boldsymbol{0}}|\Lambda_m|.
\]
It leads to 
\[
g_m^\#=\frac{g_m(\boldsymbol{0})}{c_{\boldsymbol{0}}|\Lambda_m|}=\frac{g_m(\boldsymbol{0})}{m^n c_{\boldsymbol{0}}|\Lambda|}. 
\]
\end{rem}
\section{Main Results}
In this section, we present our main results.  {In Section \ref{pre}, we introduce the sequence of auxiliary functions SSA, which plays a fundamental role in our linear programming framework. }
First of all, we will show that for any $f\in \text{SA}$, there exists a corresponding sequence $\{f_m\}_{m\in \mathbb{N}^{+}}\in \text{SSA}$ such that $\{f_m\}_{m\in \mathbb{N}^{+}}^{\#}= {f}^{\#}$. 
\begin{thm}\label{Tf}
Denote the transform $T$ on a given function $f$ as $Tf=\{f_1,\cdots,f_m,\ldots\}$, where $f_m$ is defined as 
  \[f_m(\boldsymbol{x}):=T_mf(\boldsymbol{x}):=\sum_{\boldsymbol{t}\in \Lambda_m}f(\boldsymbol{x}+\boldsymbol{t}),\qquad \text{for}\ m=1,2,\ldots.\]
  Then for any fixed $f\in \text{SA}$, we have
  \begin{enumerate}[{(1)}]
  \item $Tf\in \text{SSA}$;
  \item $\{f_m\}_{m\in \mathbb{N}^{+}}^{\#}= {f}^{\#}$.
  \end{enumerate}
\end{thm}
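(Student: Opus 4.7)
The plan is to verify the two claims independently; both rely crucially on the admissibility of $f$ (which yields absolute convergence in both physical and Fourier space) and on the Poisson summation formula applied to the lattice $\Lambda_m$.

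For part (1), I would check the three defining properties of $\text{SA}_m$ for $f_m = T_m f$ in turn. Periodicity (property (i)) is immediate from shifting the summation index by a vector in $\Lambda_m$. For property (ii), the Poisson summation formula, applicable because $f$ and $\widehat{f}$ decay like $(1+\|\boldsymbol{x}\|_2)^{-n-\delta}$, yields
$$f_m(\boldsymbol{x}) = \sum_{\boldsymbol{t}\in \Lambda_m} f(\boldsymbol{x}+\boldsymbol{t}) = \frac{1}{|\Lambda_m|} \sum_{\boldsymbol{t}\in \Lambda_m^*} \widehat{f}(\boldsymbol{t})\, e^{2\pi i \boldsymbol{t}\cdot\boldsymbol{x}}.$$
Since $f$ is real-valued and $\widehat{f} \geq 0$, $\widehat{f}$ is real and even, so pairing $\boldsymbol{t}$ with $-\boldsymbol{t}$ rewrites this as a cosine series with coefficients $c_{\boldsymbol{t}} = \widehat{f}(\boldsymbol{t})/|\Lambda_m|$. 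These are non-negative by SA condition (ii), strictly positive at $\boldsymbol{0}$ by SA condition (iii), and absolutely summable thanks to the decay estimate on $\widehat{f}$. For property (iii), if $\|[\boldsymbol{x}]\|_{\mathbb{R}^n/\Lambda_m} \geq 1$, the definition of the quotient norm forces $\|\boldsymbol{x}+\boldsymbol{t}\|_2 \geq 1$ for every $\boldsymbol{t}\in\Lambda_m$, so $f(\boldsymbol{x}+\boldsymbol{t}) \leq 0$ term by term and hence $f_m(\boldsymbol{x}) \leq 0$.

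For part (2), I would compute the numerator and denominator of $f_m^{\#}$ separately. The denominator is constant in $m$: the translates $\boldsymbol{t} + \mathcal{P}(\Lambda_m)$, $\boldsymbol{t}\in \Lambda_m$, tile $\mathbb{R}^n$, so
$$\widehat{f}_m(\boldsymbol{0}) = \int_{\mathcal{P}(\Lambda_m)} \sum_{\boldsymbol{t}\in\Lambda_m} f(\boldsymbol{x}+\boldsymbol{t})\, d\boldsymbol{x} = \int_{\mathbb{R}^n} f(\boldsymbol{y})\, d\boldsymbol{y} = \widehat{f}(\boldsymbol{0}).$$
For the numerator, I isolate the $\boldsymbol{t}=\boldsymbol{0}$ contribution, writing $f_m(\boldsymbol{0}) = f(\boldsymbol{0}) + \sum_{\boldsymbol{t}\in\Lambda_m\setminus\{\boldsymbol{0}\}} f(\boldsymbol{t})$. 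The tail is controlled by the substitution $\boldsymbol{t} = m\boldsymbol{s}$ with $\boldsymbol{s}\in \Lambda\setminus\{\boldsymbol{0}\}$ combined with admissibility:
$$\left|\sum_{\boldsymbol{t}\in\Lambda_m\setminus\{\boldsymbol{0}\}} f(\boldsymbol{t})\right| \leq C \sum_{\boldsymbol{s}\in \Lambda\setminus\{\boldsymbol{0}\}} (1+m\|\boldsymbol{s}\|_2)^{-n-\delta} \leq \frac{C}{m^{n+\delta}} \sum_{\boldsymbol{s}\in \Lambda\setminus\{\boldsymbol{0}\}} \|\boldsymbol{s}\|_2^{-n-\delta},$$
and the residual lattice sum converges, so the tail tends to $0$ as $m\to\infty$. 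Hence $f_m(\boldsymbol{0}) \to f(\boldsymbol{0})$, the quotient $f_m(\boldsymbol{0})/\widehat{f}_m(\boldsymbol{0})$ converges (so the $\underline{\lim}$ is a genuine limit), and it equals $f(\boldsymbol{0})/\widehat{f}(\boldsymbol{0}) = f^{\#}$.

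No substantive obstacle is anticipated; the main items that require care are the appeal to Poisson summation (justified by the decay hypotheses in admissibility) and the reconciliation of the exponential representation obtained from Poisson with the cosine representation required by Definition~\ref{cohn function}(2)(ii), which succeeds exactly because $\widehat{f}$ is forced to be real and even. The rest is essentially a tiling argument for the denominator and a standard lattice tail bound for the numerator.
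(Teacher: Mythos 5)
Your proposal is correct and follows essentially the same route as the paper: Poisson summation over $\Lambda_m$ to obtain the cosine expansion with coefficients $\widehat{f}(\boldsymbol{t})/|\Lambda_m|$ for part (1), and the tiling identity for $\widehat{f}_m(\boldsymbol{0})$ together with the vanishing lattice tail for $f_m(\boldsymbol{0})$ in part (2). The only difference is that you spell out two details the paper leaves implicit (the evenness of $\widehat{f}$ justifying the cosine form, and the explicit $m^{-n-\delta}$ tail estimate), which is a welcome addition rather than a deviation.
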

\begin{proof}
The proof is provided in Section \ref{prof Tf}.
\end{proof}
 {Before we present the upper bound of center density $\delta_n$ according to SSA, } we restate the linear programming bound of  sphere packing problem in \cite[Theorem 3.1]{Cohn} by Theorem \ref{thm: cohn}.
\begin{thm}\cite{Cohn}\label{thm: cohn}
For any fixed dimension $n$, the center density $\delta_n$ satisfies
\[
\delta_n\leq \frac{1}{2^n}\underset{f\in \text{SA}}\inf f^{\#},
\]
where \text{SA} is the auxiliary set defined in Definition \ref{cohn function}.
\end{thm}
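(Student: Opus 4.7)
The plan is to follow the standard Cohn--Elkies Poisson summation argument, adapted to the normalization in use here: balls of radius $1$, so any two distinct centers are separated by distance at least $2$, while the admissibility condition places the sign change of $f$ at $\|\boldsymbol{x}\|_2 = 1$.

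First I would reduce to periodic packings: by \cite[Appendix A]{Cohn} the density $\delta_n$ is the supremum of $\delta_{\mathcal{P}}$ over periodic packings, so it suffices to fix a periodic packing $\mathcal{P}$ whose center set has the form $C = \bigcup_{j=1}^N (x_j + L)$, where $L\subset\mathbb{R}^n$ is a full-rank lattice and $x_1,\dots,x_N$ are representatives in a fundamental domain of $L$. For such a packing one has $\delta_{\mathcal{P}} = N/|L|$. Next, given $f \in \text{SA}$, I would rescale by setting $g(\boldsymbol{x}) := f(\boldsymbol{x}/2)$; then $g(\boldsymbol{x}) \leq 0$ whenever $\|\boldsymbol{x}\|_2 \geq 2$, $\widehat{g}(\boldsymbol{t}) = 2^n \widehat{f}(2\boldsymbol{t}) \geq 0$, and $g(\boldsymbol{0})/\widehat{g}(\boldsymbol{0}) = 2^{-n} f^{\#}$. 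The task is thus reduced to proving $N/|L| \leq g(\boldsymbol{0})/\widehat{g}(\boldsymbol{0})$.

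The core of the argument is to evaluate the double sum
\[
S := \sum_{j,k=1}^N \sum_{\boldsymbol{v} \in L} g(x_j - x_k + \boldsymbol{v})
\]
in two ways. Geometrically, every summand with $(j,\boldsymbol{v}) \neq (k,\boldsymbol{0})$ is $g$ evaluated at a difference of two distinct centers of $\mathcal{P}$, a vector of norm $\geq 2$, so such a summand is nonpositive by construction; only the $N$ diagonal terms $g(\boldsymbol{0})$ can contribute positively, giving $S \leq N g(\boldsymbol{0})$. Spectrally, applying Poisson summation to the inner sum over $\boldsymbol{v}\in L$,
\[
\sum_{\boldsymbol{v} \in L} g(\boldsymbol{y} + \boldsymbol{v}) \;=\; \frac{1}{|L|} \sum_{\boldsymbol{t} \in L^*} \widehat{g}(\boldsymbol{t})\, e^{2\pi i \boldsymbol{t} \cdot \boldsymbol{y}},
\]
and inserting $\boldsymbol{y} = x_j - x_k$, one obtains
\[
S \;=\; \frac{1}{|L|} \sum_{\boldsymbol{t} \in L^*} \widehat{g}(\boldsymbol{t}) \Bigl| \sum_{j=1}^N e^{2\pi i \boldsymbol{t} \cdot x_j} \Bigr|^2 \;\geq\; \frac{N^2 \widehat{g}(\boldsymbol{0})}{|L|},
\]
since each term on the right is nonnegative by property (ii) of $\text{SA}$, and the $\boldsymbol{t}=\boldsymbol{0}$ term alone already contributes $N^2 \widehat{g}(\boldsymbol{0})/|L|$.

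Combining the two estimates gives $N^2 \widehat{g}(\boldsymbol{0})/|L| \leq N g(\boldsymbol{0})$, i.e.\ $\delta_{\mathcal{P}} = N/|L| \leq g^{\#} = 2^{-n} f^{\#}$. Taking the supremum over periodic $\mathcal{P}$ and the infimum over $f \in \text{SA}$ yields the claimed bound. The main technical point to watch is the justification of the Poisson summation step and the interchange of the two series defining $S$: both require the admissibility decay $|f(\boldsymbol{x})|,|\widehat{f}(\boldsymbol{x})| \lesssim (1+\|\boldsymbol{x}\|_2)^{-n-\delta}$, which guarantees absolute convergence on both the spatial and Fourier sides simultaneously and is precisely why admissibility was built into the definition of $\text{SA}$.
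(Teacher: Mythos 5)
Your argument is correct and is essentially the standard Cohn--Elkies proof; the paper itself only cites this theorem from \cite{Cohn} without reproving it, but the same two-sided evaluation of the double sum $\sum_{j,k}\sum_{\boldsymbol{v}\in L} g(x_j-x_k+\boldsymbol{v})$ (nonpositivity off the diagonal versus nonnegativity of $\widehat{g}\,\bigl|\sum_j e^{2\pi i \boldsymbol{t}\cdot x_j}\bigr|^2$ on the Fourier side) is exactly the mechanism used in the paper's proof of Theorem \ref{key1}. Your handling of the normalization via $g(\boldsymbol{x})=f(\boldsymbol{x}/2)$, and your remark that admissibility is what licenses Poisson summation and the interchange of sums, are both accurate.
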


The following theorem shows that we can also get the upper bound of the center density $\delta_n$ based on SSA.
According to Theorem \ref{Tf} and Theorem \ref{key1} below, we can conclude that the auxiliary function set SSA is broader than SA with possibly tighter upper bound estimation.
\begin{thm}
\label{key1}
For any fixed dimension $n$, the center density $\delta_n$ satisfies
\[
\delta_n\leq \frac{1}{2^n}\underset{\{g_m\}_{m\in \mathbb{N}^{+}}\in \text{SSA}}\inf \{g_m\}_{m\in \mathbb{N}^{+}}^{\#},
\]
where SSA is the set of  sequences of some auxiliary functions defined in Definition \ref{cohn function}.
\end{thm}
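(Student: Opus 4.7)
The plan is to adapt the Cohn--Elkies Poisson-summation argument so that the periodic auxiliary function $g_m$ acts directly on the torus $\mathbb{R}^n/\Lambda_m$, bypassing the need for Poisson summation because $g_m$ is already given in Fourier form on this torus.

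Step one is a reduction to $\Lambda_m$-periodic packings. By the standard result that periodic packings can approximate $\delta_n$ arbitrarily well (Cohn--Elkies, Appendix A of \cite{Cohn}), I would, for every $\epsilon>0$ and every sufficiently large $m$, construct a $\Lambda_m$-periodic packing $\mathcal{P}_m$ with centers at pairwise distance $\geq 1$ (after rescaling) and density $\delta_{\mathcal{P}_m}\geq \delta_n-\epsilon_m$, with $\epsilon_m\to 0$. The construction takes a near-optimal periodic packing, restricts to the fundamental cell $\mathcal{P}(\Lambda_m)$, deletes the centers lying in a boundary strip of width $1$ (so that no collisions arise when one tiles $\Lambda_m$-periodically), and then tiles. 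The discarded region has volume $O(m^{n-1})$ while $|\Lambda_m|\sim m^n$, so the density loss is $O(m^{-1})$.

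Step two is the Fourier identity. For $\mathcal{P}_m$ with representatives $\boldsymbol{x}_1,\ldots,\boldsymbol{x}_{N_m}\in\mathcal{P}(\Lambda_m)$, consider
\[
S_m:=\sum_{i,j=1}^{N_m} g_m(\boldsymbol{x}_i-\boldsymbol{x}_j).
\]
Inserting the expansion $g_m(\boldsymbol{x})=\sum_{\boldsymbol{t}\in\Lambda_m^*} c_{\boldsymbol{t}}\cos(2\pi\boldsymbol{t}\bm\cdot\boldsymbol{x})$ and writing the cosine as the real part of a complex exponential collapses the double sum into
\[
S_m=\sum_{\boldsymbol{t}\in\Lambda_m^*} c_{\boldsymbol{t}}\Bigl|\sum_{j=1}^{N_m} e^{2\pi i\,\boldsymbol{t}\bm\cdot\boldsymbol{x}_j}\Bigr|^2\geq c_{\boldsymbol{0}}\,N_m^2,
\]
using property (ii) of $\text{SA}_m$ (all $c_{\boldsymbol{t}}\geq 0$) and isolating the $\boldsymbol{t}=\boldsymbol{0}$ term. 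On the other hand, the $\Lambda_m$-periodic packing condition forces $\|[\boldsymbol{x}_i-\boldsymbol{x}_j]\|_{\mathbb{R}^n/\Lambda_m}\geq 1$ for $i\neq j$, so property (iii) of $\text{SA}_m$ makes every off-diagonal term of $S_m$ non-positive, leaving $S_m\leq N_m\,g_m(\boldsymbol{0})$.

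Combining the two bounds gives $c_{\boldsymbol{0}}N_m\leq g_m(\boldsymbol{0})$, hence, using Remark \ref{g remark} and $\delta_{\mathcal{P}_m}=N_m/(2^n|\Lambda_m|)$,
\[
\delta_{\mathcal{P}_m}\leq \frac{g_m(\boldsymbol{0})}{2^n\,c_{\boldsymbol{0}}|\Lambda_m|}=\frac{g_m^{\#}}{2^n}.
\]
Taking $\liminf_{m\to\infty}$ and appealing to step one yields $\delta_n\leq \{g_m\}_{m\in\mathbb{N}^+}^{\#}/2^n$, which is the claimed bound. The main obstacle is step one: unlike in the Cohn--Elkies setting, we need approximation by packings periodic with respect to the specific lattice family $\{m\Lambda\}$, so one has to verify that the boundary-strip deletion produces only an $O(m^{-1})$ relative density loss uniformly over the near-optimal packings being approximated. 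The Fourier manipulation in step two, by contrast, is essentially a direct computation once the periodic setting is in place.
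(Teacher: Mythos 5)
Your proposal is correct and follows essentially the same route as the paper: reduce to $\Lambda_m$-periodic packings via Cohn--Elkies Appendix A, then bound the double sum $S_m=\sum_{i,j}g_m(\boldsymbol{x}_i-\boldsymbol{x}_j)$ from above by $N_m g_m(\boldsymbol{0})$ using condition (iii) and from below by $c_{\boldsymbol{0}}N_m^2$ using condition (ii), and take $\liminf$. The only difference is that you spell out the boundary-strip deletion needed to approximate by packings periodic under the specific family $\{m\Lambda\}$, a point the paper dispatches with a one-line citation; your added care there is warranted but does not change the argument.
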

\begin{proof}
The proof is provided  in Section \ref{proof1}.
\end{proof}

\begin{rem}\label{one-dimen-remark}
 {Consider $1$-dimensional auxiliary functions $g(\cdot)$ and $h(\cdot)$ introduced by Cohn and Elkies \cite{Cohn}, that is,
\[
g(x)=(1-|x|)\chi_{[-1,1]}(x) \qquad  \mbox{and} \qquad  h(x)=\frac{1}{(1-x^2)}\cdot\frac{\sin^2(\pi x)}{(\pi x)^2},
\]
 where $\chi_{[-1,1]}$ is the characteristic function on $[-1,1]$, they solve the sphere packing in dimension 1. We can see that these two useful auxiliary functions also have good performance in our framework. 
By the Poisson summation formula \cite[Equation (2.1)]{Cohn} and Theorem \ref{Tf}, we have
\[
g_{m}(x)=\frac{1}{|\Lambda_m|}\sum_{t\in \Lambda_m^*}c_{t,1}\cos(2\pi tx)\quad \text{and}\quad  h_{m}(x)=\frac{1}{|\Lambda_m|}\sum_{t\in \Lambda_m^*}c_{t,2}\cos(2\pi tx),
\]
where $c_{0,1}=c_{0,2}=1$ and $g_{m}(0)=h_{m}(0)=\widehat{g_{m}}(0)=\widehat{h_{m}}(0)=1$ for every $m\in N^+$.
Then, by Definition \ref{param}, $\{g_m\}_{m\in \mathbb{N}^{+}}^{\#}=\{f_m\}_{m\in \mathbb{N}^{+}}^{\#}=1$.
Hence, according to Theorem \ref{key1}, $\delta_1\leq \frac{1}{2}$. }
\end{rem}

 {Based on Remark \ref{one-dimen-remark} and Theorem \ref{Tf}, readers may wonder whether it suffices to apply Cohn and Elkies' functions in SA to construct the sequences of auxiliary functions in SSA. However, the auxiliary functions in SA may not achieve the optimal packing bounds in high dimensions. As the functions in SSA is more flexible than those in SA, we need to develop new sequences of auxiliary functions in SSA that do not originate from SA and still meet the optimal bounds.
Proposition \ref{thm: one_dim_auxiliary} introduces a new sequence of auxiliary functions, denoted as $\{g_m\}^{\#}_{m\in \mathbb{N}^{+}} \in \text{SSA}$, which is efficient in $1$-dimensional case.  Furthermore, the auxiliary functions presented in Proposition \ref{thm: one_dim_auxiliary} cannot be constructed from any functions in SA. More details can be found in Proposition \ref{true_set}. Proposition \ref{thm: one_dim_auxiliary} provides insights into the $2$-dimensional, $3$-dimensional, and even higher-dimensional cases.}
\begin{prop}\label{thm: one_dim_auxiliary}
For any fixed $m\in\{3,4,\ldots\}$, define the function $h_{m}$ as
\[
h_{m}(z)=\frac{(z^{m}-1)^{2}}{(z-1)^{2}\cdot z^{m-2}\cdot(z-\zeta_{m})(z-\overline{\zeta_{m}})},
\]
where $\zeta_{m}=e^{\mathrm{i}\frac{2\pi}{m}}$. Define the continuous function $g_m: \mathbb{R}\rightarrow \mathbb{C}$ such that
$g_{m}(x)=h_{m}(e^{\mathrm{i}\frac{2\pi x}{m}})$, for $x\notin \mathbb{Z}$.
Then $g_{m}\in SA_{m}$, for $m=3,4,\ldots$, and 
$
\{g_m\}_{m\in \mathbb{N}^{+}}^{\#}=1.
$
\end{prop}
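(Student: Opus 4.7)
The plan is to simplify $g_m$ to a closed real form, verify the three conditions of Definition~\ref{cohn function} for membership in $\text{SA}_{m}$ in turn, and then evaluate $g_m^{\#}$. Setting $z=e^{\mathrm{i}\theta}$ with $\theta=2\pi x/m$, the identities $(z^m-1)/(z-1)=:S(z)=1+z+\cdots+z^{m-1}$, $|S(e^{\mathrm{i}\theta})|^2=\sin^2(m\theta/2)/\sin^2(\theta/2)$, and $(z-\zeta_m)(z-\overline{\zeta_m})=2z(\cos\theta-\cos(2\pi/m))$ on $|z|=1$ will reduce the definition of $h_m$ to
\[
g_m(x)=\frac{\sin^2(\pi x)}{2\bigl(\cos(2\pi x/m)-\cos(2\pi/m)\bigr)\sin^2(\pi x/m)},
\]
from which $m$-periodicity and agreement with the stated continuous extension across the apparent singularities at $x\in m\mathbb{Z}\cup(m\mathbb{Z}\pm 1)$ are immediate. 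Condition (iii) follows at once: $\sin^2(\pi x)$ and $\sin^2(\pi x/m)$ are non-negative, while monotonicity of $\cos$ on $(0,\pi)$ together with the symmetry $x\leftrightarrow m-x$ shows that $\cos(2\pi x/m)-\cos(2\pi/m)\leq 0$ precisely when $x\in[1,m-1]$, i.e., when $\|[x]\|_{\mathbb{R}/\Lambda_m}\geq 1$.

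For condition (ii), I will first note that $S(z)^2=\bigl(\prod_{k=1}^{m-1}(z-\zeta_m^k)\bigr)^2$ has a double zero at each of $\zeta_m$ and $\overline{\zeta_m}$, so the apparent poles of $h_m(z)=S(z)^2/[z^{m-2}(z-\zeta_m)(z-\overline{\zeta_m})]$ cancel and $h_m$ is a Laurent polynomial supported on $\{z^k:|k|\leq m-2\}$. A direct substitution gives $h_m(1/z)=h_m(z)$, so its Laurent coefficients $\{b_k\}$ satisfy $b_k=b_{-k}$, and
\[
g_m(x)=b_0+2\sum_{k=1}^{m-2}b_k\cos\bigl(2\pi(k/m)x\bigr)
\]
is the desired cosine expansion indexed by $\Lambda_m^{*}=\tfrac{1}{m}\mathbb{Z}$; the summability condition is automatic since the sum is finite.

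The main obstacle is showing $b_k\geq 0$. My approach is to exploit the identity $h_m(z)\cdot(z+z^{-1}-2\cos(2\pi/m))=S(z)^2/z^{m-1}$ and expand the right-hand side as the Fej\'er-like sum $\sum_{|j|\leq m-1}(m-|j|)z^j$; matching Laurent coefficients produces the linear recurrence
\[
b_{j+1}-2\cos(2\pi/m)\,b_j+b_{j-1}=(m-|j|)\,\mathbf{1}_{\{|j|\leq m-1\}},
\]
subject to $b_j=b_{-j}$ and $b_j=0$ for $|j|\geq m-1$. Solving with a linear-in-$j$ particular solution plus the natural trigonometric homogeneous modes, and imposing the two boundary conditions, yields
\[
b_j=\frac{(m-j)-U_{m-j-1}(\cos(2\pi/m))}{4\sin^2(\pi/m)},\qquad 0\leq j\leq m-1,
\]
where $U_n$ denotes the Chebyshev polynomial of the second kind. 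The classical bound $|U_n(\cos\phi)|\leq n+1$, immediate from the representation $U_n(\cos\phi)=\sum_{k=0}^{n}e^{\mathrm{i}(2k-n)\phi}$ as a sum of $n+1$ unit-modulus complex numbers, then delivers $b_j\geq 0$. Since $U_{m-1}(\cos(2\pi/m))=\sin(2\pi)/\sin(2\pi/m)=0$, the same formula gives $b_0=m/(4\sin^2(\pi/m))>0$; combined with $g_m(0)=h_m(1)=m^2/(4\sin^2(\pi/m))$ and $\widehat{g}_m(\boldsymbol{0})=b_0\cdot m$ from Remark~\ref{g remark}, we obtain $g_m^{\#}=1$ for every $m\geq 3$, whence $\{g_m\}_{m\in\mathbb{N}^{+}}^{\#}=1$.
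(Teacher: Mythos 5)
Your proposal is correct, and while its overall architecture (reduce $h_m$ to a symmetric Laurent polynomial, prove non-negativity of the coefficients, use a real closed form for the sign condition, then compute $g_m(0)$ and $\widehat{g}_m(0)$) mirrors the paper's three steps, the key positivity argument is genuinely different. The paper expands $\frac{1}{(z-1)^2(z-\zeta_m)(z-\overline{\zeta_m})}$ by partial fractions into power series, reads off the coefficients $s_k$ explicitly, and proves strict positivity of $\widetilde{c}_k=s_{k+m-2}$ via the telescoping identity $(k+m)\sin\frac{2\pi}{m}-\sin\frac{2(k+m)\pi}{m}=\sum_{l=1}^{k+m-1}4\sin\frac{\pi}{m}\sin\frac{l\pi}{m}\sin\frac{(l+1)\pi}{m}$, a sum of visibly positive terms. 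You instead multiply by $z+z^{-1}-2\cos\frac{2\pi}{m}$ to convert the problem into a three-term recurrence driven by the Fej\'er coefficients $m-|j|$, solve it, and invoke the Chebyshev bound $|U_n(\cos\phi)|\leq n+1$. Your closed form $b_j=\frac{(m-j)-U_{m-j-1}(\cos(2\pi/m))}{4\sin^2(\pi/m)}$ agrees with the paper's $s_{k+m-2}$ (via $U_{m-j-1}(\cos\frac{2\pi}{m})=-\sin\frac{2\pi j}{m}/\sin\frac{2\pi}{m}$), and your values $b_0=m/(4\sin^2\frac{\pi}{m})$, $g_m(0)=m^2/(4\sin^2\frac{\pi}{m})$ match the paper's $\frac{m^2}{2-2\cos(2\pi/m)}$. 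The trade-offs: your route is arguably more systematic (recurrence plus a standard Chebyshev estimate rather than an ad hoc telescoping identity), but it yields only $b_j\geq 0$ where the paper obtains strict positivity of every coefficient; since Definition \ref{cohn function} only requires $c_{\boldsymbol{0}}>0$ and $c_{\boldsymbol{t}}\geq 0$, this costs you nothing here, though strict positivity of $c_{m-2}$ is what the paper later exploits in Proposition \ref{true_set} (your explicit $b_{m-2}=1$ recovers it anyway). Steps for condition (iii) and the final ratio are essentially identical to the paper's. If you write this up, spell out which two boundary conditions pin down the homogeneous modes (e.g.\ $b_{m-1}=b_m=0$, forced by the support $|k|\leq m-2$) and verify the recurrence at $j=0$ against the symmetry $b_j=b_{-j}$; as stated this is the only place where the reader has to take something on faith.
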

\begin{proof}
The proof  is provided  in Section \ref{proofs}.
\end{proof}

{
The sequence of the auxiliary functions in Proposition \ref{thm: one_dim_auxiliary}  solves the $1$-dimensional sphere packing problem.
Furthermore, Proposition \ref{true_set} shows that  such kind of auxiliary function sequence cannot be generated by $Tf$ for any $f\in \text{SA}$.
 The parameters $\{\alpha_m\}_{m\in \mathbb{N}^{+}}$ in Proposition \ref{true_set}
is added to increase the flexibility of scaling when different $m$ is chosen and does not influence the center density estimation in Theorem \ref{key1}. }
\begin{prop}\label{true_set}
Assume that the auxiliary function $g_m:\mathbb{R}\rightarrow \mathbb{R}$ is defined in  Proposition \ref{thm: one_dim_auxiliary} and the transform $T$ is defined in Theorem \ref{Tf}. For any sequence of positive constants $\{\alpha_m\}_{m\in \mathbb{N}^{+}}$, there does not exist any function $f\in \text{SA}$, such that 
\[
Tf=\{\alpha_m g_m\}_{m\in \mathbb{N}^{+}}.
\] 
\end{prop}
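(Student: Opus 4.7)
I would argue by contradiction: suppose $f\in\text{SA}$ and some positive sequence $\{\alpha_m\}_{m\ge 1}$ satisfy $Tf=\{\alpha_m g_m\}$. The strategy is to upgrade $f$ to an entire function by showing $\widehat f$ is compactly supported, identify the zeros of $f$ on the integers, pin $f$ down explicitly by Hermite interpolation, and then use the vanishing of $\widehat f$ at the point $(m-1)/m$ (which must hold for every $m\ge 3$) to exhibit a single equation that cannot be satisfied for multiple $m$.

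The first step applies Poisson summation,
\[
T_m f(x)=\frac{1}{m}\sum_{k\in\mathbb{Z}}\widehat f(k/m)\,e^{2\pi ikx/m},
\]
and observes, via a short algebraic simplification of $h_m$, that $h_m(z)=\sum_{k=-(m-2)}^{m-2}a_k^{(m)} z^k$ is a Laurent polynomial of width $2m-3$. Matching coefficients with $T_m f=\alpha_m g_m$ forces $\widehat f(k/m)=0$ whenever $|k|\ge m-1$. Since $f$ is admissible, $\widehat f$ is continuous; combined with the density of these rationals in $(-\infty,-1]\cup[1,\infty)$, this yields $\operatorname{supp}\widehat f\subset[-1,1]$, and by Paley--Wiener $f$ extends to an entire function of exponential type at most $2\pi$. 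The identity $h_m(1/z)=h_m(z)$ makes $g_m$ (and hence $T_m f$) even, so letting $m\to\infty$ and using admissible decay gives $f(-x)=f(x)$. The factorization $(z^m-1)^2=\prod_{j=0}^{m-1}(z-\zeta_m^j)^2$ shows that $g_m$ vanishes at every integer $1\le|k|\le m-1$, so $T_m f(k)=0$ at those integers; taking $m\to\infty$ yields $f(k)=0$ for every nonzero integer $k$. Finally, the SA sign condition $f(x)\le 0$ for $|x|\ge 1$ makes each integer $|k|\ge 2$ an interior local maximum of the real-analytic $f$, so $f'(k)=0$ there as well.

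At this point the Hermite-type interpolation formula for the Paley--Wiener space of type $2\pi$, applied with $f$ even and only $(f(0),f'(\pm 1))$ nonvanishing, reduces to
\[
f(x)=\left(\frac{\sin\pi x}{\pi}\right)^2\left[\frac{a}{x^2}+\frac{2\,f'(1)}{x^2-1}\right],\qquad a:=f(0)>0.
\]
A term-by-term Fourier computation using $\widehat{\sin^2(\pi x)/(\pi x)^2}(\xi)=(1-|\xi|)_+$ and the partial-fractions identity $\widehat{\sin^2(\pi x)/(x^2-1)}(\xi)=-\tfrac{\pi}{2}\sin(2\pi|\xi|)\mathbf{1}_{[-1,1]}(\xi)$ produces
\[
\widehat f(\xi)=a(1-|\xi|)_+ - \frac{f'(1)}{\pi}\sin(2\pi|\xi|)\,\mathbf{1}_{[-1,1]}(\xi).
\]
Evaluating at $\xi=(m-1)/m$ and using $\widehat f((m-1)/m)=0$ from the first paragraph simplifies to
\[
f'(1)=-\frac{a\pi}{m\sin(2\pi/m)},
\]
which must hold simultaneously for every $m\ge 3$. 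Since the right-hand side is not constant in $m$ (for instance $m=3$ gives $-2a\pi/(3\sqrt 3)$ while $m=4$ gives $-a\pi/4$), this produces the desired contradiction.

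The main technical obstacle is justifying the Hermite interpolation within the admissible Paley--Wiener class: uniqueness follows from the observation that the difference of two candidate extensions would be a bounded entire function of type $\le 2\pi$ with double zeros at every integer, hence a constant multiple of $\sin^2(\pi x)$, which is forced to vanish by the admissible decay. The Fourier-transform identity for $\sin^2(\pi x)/(x^2-1)$ is mildly non-standard but reduces, via a partial-fraction decomposition, to the (distributional) Fourier transform of $\sin^2(\pi u)/u$; the remainder of the argument is bookkeeping with Poisson summation and the zero set of $h_m$.
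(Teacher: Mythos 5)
Your argument is correct, but it takes a far longer road than the paper's. Both proofs begin identically: Poisson summation identifies $\widehat f(l/m)$ (up to the factor $\alpha_m/m$) with the Fourier coefficients of $\alpha_m g_m$, and the fact that $h_m$ is a Laurent polynomial with exponents in $\{-(m-2),\dots,m-2\}$ forces $\widehat f(k/m)=0$ for all $|k|\ge m-1$. At that point the paper finishes in two lines: Step 1 of Proposition \ref{thm: one_dim_auxiliary} also gives $c_{m-2}>0$, hence $\widehat f\bigl(\tfrac{m-2}{m}\bigr)\neq 0$; replacing $m$ by $2m$ and noting $\tfrac{2m-2}{2m}=\tfrac{m-1}{m}$ makes the same value of $\widehat f$ simultaneously zero and nonzero. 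You have both ingredients in hand (the vanishing at $\tfrac{m-1}{m}$, and --- implicitly, via the strict positivity of the coefficients of $g_m$ --- the nonvanishing at $\tfrac{m-2}{m}$), but instead of combining them you discard the positivity information and rebuild $f$ from scratch: density of the vanishing set gives $\operatorname{supp}\widehat f\subset[-1,1]$, Paley--Wiener gives an entire extension, the zero set of $h_m$ at roots of unity gives $f(k)=0$ for $k\in\mathbb{Z}\setminus\{0\}$ and $f'(k)=0$ for $|k|\ge2$, and Hermite interpolation pins down $\widehat f$ explicitly, after which $\widehat f\bigl(\tfrac{m-1}{m}\bigr)=0$ for two different $m$ is contradictory. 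I checked the computations (the Fourier transform of $\sin^2(\pi x)/(x^2-1)$, the evaluation $\widehat f\bigl(\tfrac{m-1}{m}\bigr)=\tfrac{a}{m}+\tfrac{f'(1)}{\pi}\sin\tfrac{2\pi}{m}$) and they are right, and your flagged technical point --- uniqueness of the type-$2\pi$ interpolant via the $\sin^2(\pi z)$ division argument --- is handled adequately. What your route buys is an explicit description of the unique candidate $f$ compatible with the interpolation data, which has some independent interest; what it costs is a substantial amount of machinery (Paley--Wiener, interpolation in $PW_{2\pi}$, distributional Fourier identities) that the paper's elementary rescaling trick $m\mapsto 2m$ renders unnecessary.
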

\begin{proof}
The proof is provided in Section \ref{sec: true_set}.
\end{proof}
\begin{rem}
Combing Theorem \ref{Tf} with Proposition \ref{true_set},  we can see that the auxiliary functions in SSA is broader than those in SA.
Hence, if we can construct a good auxiliary function sequence belongs to SSA, we can possibly get better upper bounds on center density based on Theorem \ref{key1}.
\end{rem}
 {Now we turn our attention to the construction of effective auxiliary functions in $2$-dimensional case. In \cite{Cohn}, Cohn and Elkies demonstrated  that the linear programming bound established by Cohn and Elkies is sharp through numerical experiments. They aimed to construct good auxiliary functions within SA. However, the explicit form of such kind of auxiliary function is still not found. 
Proposition \ref{thm: g_22} introduces the explicit form of an auxiliary function $g_2 \in \text{SA}_{2}$ that matches the optimal packing bound. According to Theorem \ref{Tf}, there exists a connection between the functions in $\text{SA}_m$ and SA. If an efficient auxiliary function in SA could be identified to address the $2$-dimensional sphere packing problem, it would allow for the derivation of effective auxiliary functions in $\text{SA}_m$ through the transformation $T_m$.
Although we have not yet formulated the functions in $\text{SA}_m$ for general $m$, the construction presented in Proposition \ref{thm: g_22} offers suggestive directions for the development of auxiliary functions in SA.} 

\begin{prop}
\label{thm: g_22}
Take
\[
\Lambda_{2}=\{k_1\boldsymbol{v}_1+k_2\boldsymbol{v}_2\ :\ k_1,k_2\in 2\mathbb{Z}\},
\]
where  $\boldsymbol{v}_{1}=[1,0]^{\mathsf{T}}$and $\boldsymbol{v}_{2}=[\frac{1}{2},\frac{\sqrt{3}}{2}]^{\mathsf{T}}$.
Take ${c}_{0}=c_{1}=c_{2}=c_{3}=1$, and
\[
\boldsymbol{a}_{0}=[0,0]^{\mathsf{T}},\quad\boldsymbol{a}_{1}=[\pi,-\frac{1}{\sqrt{3}}\pi]^{\mathsf{T}},\quad\boldsymbol{a}_{2}=[0,\frac{2}{\sqrt{3}}\pi]^{\mathsf{T}},\quad\text{and}\quad\boldsymbol{a}_{3}=[\pi,\frac{1}{\sqrt{3}}\pi]^{\mathsf{T}}.
\]
Denote $g_{2}:\mathbb{R}^{2}\rightarrow\mathbb{R}$
as

\[
g_{2}(\boldsymbol{x})=\sum_{k=0}^{3}c_{k}\cos(\boldsymbol{a}_{k}\bm{\cdot} \boldsymbol{x}).
\]
 {Then $g_{2}\in \text{SA}_2$ and $g_2^{\#}=\frac{2}{\sqrt{3}}$. }
\end{prop}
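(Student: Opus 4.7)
The plan is to verify the three defining conditions of $\text{SA}_{2}$ for $g_2$ and then evaluate $g_2^{\#}$. For conditions (i) and (ii), I would first compute the dual lattice $\Lambda_{2}^{*}$, which is generated by $\boldsymbol{w}_1 = [1/2, -1/(2\sqrt{3})]^{\mathsf{T}}$ and $\boldsymbol{w}_2 = [0, 1/\sqrt{3}]^{\mathsf{T}}$, and then check that $\boldsymbol{a}_0/(2\pi)=\boldsymbol{0}$, $\boldsymbol{a}_1/(2\pi)=\boldsymbol{w}_1$, $\boldsymbol{a}_2/(2\pi)=\boldsymbol{w}_2$, and $\boldsymbol{a}_3/(2\pi)=\boldsymbol{w}_1+\boldsymbol{w}_2$ all lie in $\Lambda_2^{*}$. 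Hence the expansion in (ii) holds with exactly four nonzero, positive coefficients and $c_{\boldsymbol{0}}=1>0$; in particular $g_2$ is $\Lambda_2$-periodic. Using $|\Lambda_{2}|=4|\det[\boldsymbol{v}_1\,\boldsymbol{v}_2]|=2\sqrt{3}$ and $g_2(\boldsymbol{0})=\sum_k c_k=4$, Remark \ref{g remark} immediately gives $g_2^{\#}=4/(2\sqrt{3})=2/\sqrt{3}$.

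The heart of the proof is condition (iii). The first key move is to apply the sum-to-product identities: combining $1+\cos(\boldsymbol{a}_2\bm{\cdot}\boldsymbol{x})=2\cos^2(\pi x_2/\sqrt{3})$ with $\cos(\boldsymbol{a}_1\bm{\cdot}\boldsymbol{x})+\cos(\boldsymbol{a}_3\bm{\cdot}\boldsymbol{x})=2\cos(\pi x_1)\cos(\pi x_2/\sqrt{3})$ yields the factorization
\[
g_{2}(\boldsymbol{x})\;=\;4\,\cos\!\Bigl(\tfrac{\pi x_2}{\sqrt{3}}\Bigr)\,\cos\!\Bigl(\tfrac{\pi}{2}\bigl(x_1-\tfrac{x_2}{\sqrt{3}}\bigr)\Bigr)\,\cos\!\Bigl(\tfrac{\pi}{2}\bigl(x_1+\tfrac{x_2}{\sqrt{3}}\bigr)\Bigr).
\]
A direct calculation shows that the $60^{\circ}$ rotation and the reflection across the $x_1$-axis permute the frequency set $\{\pm\boldsymbol{a}_1,\pm\boldsymbol{a}_2,\pm\boldsymbol{a}_3\}$, so $g_2$ inherits the full dihedral symmetry of the triangular lattice at the origin. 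Combined with $\Lambda_2$-periodicity, this reduces the verification of (iii) to a single ``corner region'': the intersection of the Voronoi hexagon of $\boldsymbol{0}\in\Lambda_2$, the angular sector $0\le\arg\boldsymbol{x}\le 60^{\circ}$, and the complement of the open unit disk. Explicitly, it is the curvilinear triangle with vertex $(1,1/\sqrt{3})$ bounded by the segments $\{x_1=1,\,0\le x_2\le 1/\sqrt{3}\}$ and $\{x_1+\sqrt{3}x_2=2,\,1/2\le x_1\le 1\}$ and by the arc $x_1^2+x_2^2=1$ joining $(1,0)$ to $(1/2,\sqrt{3}/2)$.

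Within this corner region, I would bound the argument of each cosine factor. Writing $u=x_1-x_2/\sqrt{3}$ and $s=x_1+x_2/\sqrt{3}$, a short case analysis on the two straight boundary segments, together with monotonicity of $s(\theta)=\cos\theta+\sin\theta/\sqrt{3}$ and $u(\theta)=\cos\theta-\sin\theta/\sqrt{3}$ on the arc for $\theta\in[0,\pi/3]$, yields $0\le x_2\le\sqrt{3}/2$, $0\le u\le 1$, and $1\le s\le 4/3$. Consequently the first two cosine factors lie in $[0,1]$ and the third in $[-1/2,0]$, so the product is nonpositive and $g_2(\boldsymbol{x})\le 0$ on the corner region. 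By the dihedral symmetry and $\Lambda_2$-periodicity this extends to all of $\{\boldsymbol{x}:\|[\boldsymbol{x}]\|_{\mathbb{R}^2/\Lambda_2}\ge 1\}$, completing (iii). The main obstacle is the lower bound $s\ge 1$, which is tight: equality holds exactly at the two tangent points $(1,0)$ and $(1/2,\sqrt{3}/2)$ where the unit circle meets the Voronoi hexagon and where $g_2$ vanishes, and these are the points of $\Lambda\setminus\Lambda_2$. This tightness is unavoidable since $g_2^{\#}=2/\sqrt{3}$ matches the optimal two-dimensional packing bound; in particular, any naive pointwise estimate without the explicit product factorization would fail.
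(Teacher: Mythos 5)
Your proposal is correct, and its engine is the same identity the paper uses: the triple-product factorization $1+\cos A+\cos B+\cos(A+B)=4\cos\frac{A}{2}\cos\frac{B}{2}\cos\frac{A+B}{2}$, with the sign of $g_2$ controlled by making two factors nonnegative and one nonpositive (the paper packages this as Lemma \ref{lem: technical} after the shear $\widetilde{\boldsymbol{x}}=\boldsymbol{A}\boldsymbol{x}$). Where you genuinely diverge is in the domain reduction for condition (iii). The paper passes to lattice coordinates, takes the representative in a translated fundamental parallelepiped, and asserts ``by some direct calculations'' that $\|\boldsymbol{x}_0\|_2\ge 1$ with $-1\le \widetilde{x}_{0,1},\widetilde{x}_{0,2}<1$ forces $|\widetilde{x}_{0,1}+\widetilde{x}_{0,2}|\ge 1$ — an inequality that is true but only barely (it degenerates near the corners of the square) and is left unproved. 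You instead exploit the $D_6$ invariance of the frequency set $\{\pm\boldsymbol{a}_1,\pm\boldsymbol{a}_2,\pm\boldsymbol{a}_3\}$ (a regular hexagon of equal-length vectors) to reduce to one curvilinear corner of the Voronoi hexagon, and there you give explicit interval bounds $0\le x_2\le\sqrt{3}/2$, $0\le u\le 1$, $1\le s\le 4/3$ on the three cosine arguments. Your route costs you the symmetry verification (you should note explicitly that the rotation and reflection also preserve $\Lambda_2$, so that the quotient norm is invariant — this is immediate but needed for the reduction), but it buys a fully explicit replacement for the paper's unproved ``direct calculation'' and it makes visible exactly where the estimate is tight, namely at the deep points $\boldsymbol{v}_1$ and $\boldsymbol{v}_2$ of $\Lambda\setminus\Lambda_2$ where $g_2$ vanishes. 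Your treatment of (i)--(ii) via the dual lattice $\Lambda_2^*$ and of $g_2^{\#}$ via Remark \ref{g remark} matches the paper in substance.
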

\begin{proof}
The proof is provided in Section \ref{sec: Proof of Proposition 8}.
\end{proof}
\begin{rem}

If we can construct auxiliary functions $g_m\in \text{SA}_m$ such that $g_m^{\#}=\frac{2}{\sqrt{3}}$ for any integer $m\geq 3$,
then we have $\{g_m\}_{m\in \mathbb{N}^{+}}^{\#}=\frac{2}{\sqrt{3}}$ and $\delta_2$ is upper bounded by $\frac{1}{2\sqrt{3}}$ according to Theorem \ref{key1}.
Moreover, specific radius-$\frac{1}{2}$ sphere packing at $\mathbb{R}^2/\Lambda_2$ can be constructed in Figure \ref{fig1}, where the same shallow pattern shares the same sphere on $\mathbb{R}^2/\Lambda_2$.
\begin{figure}[htbp]
	\centering
	{\includegraphics[width=0.4\textwidth]{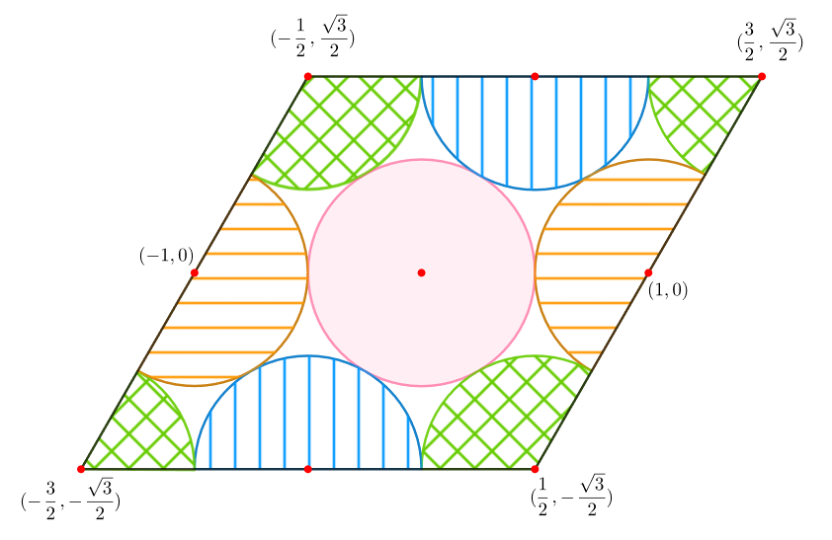}}
	\caption{Specific radius-$\frac{1}{2}$ sphere packing in $2$-dimensional case}
	\label{fig1}
\end{figure}
Then the lower bound of the center density $\delta_2$ should be $\delta_2\geq\frac{4}{2^{2}\cdot2\sqrt{3}}=\frac{1}{2\sqrt{3}}.$
Therefore, $\delta_2=\frac{1}{2\sqrt{3}}$, if  good auxiliary functions can be constructed for $m\geq 3$, and the conjecture by Cohn and Elkies \cite{Cohn} will be proven. Here we leave it for future works.
\end{rem}
 {
Last but not least, we consider the auxiliary functions in $3$-dimensional case. It is important to highlight that Cohn-Elkies' linear programming method does not achieve optimal sphere packing in this context \cite{limit,Li}.  However, drawing on Theorem \ref{Tf} and Proposition \ref{true_set}, we find that the class of sequences of $\Lambda_m$-periodic auxiliary functions in SSA is more extensive than that of the auxiliary functions proposed by Cohn and Elkies. Based on Theorem \ref{Tf} and  \cite[Theorem 6.1]{Li}, we have $T_1f\geq 0.18398089\times8\geq 1.4718>\sqrt{2}$ for any $f\in \text{SA}$.  In Proposition \ref{prop: 3dim}, we can construct an efficient auxiliary function $g \in \text{SA}_1$. Notably, $g^{\#}=\sqrt{2}$, which  equals to the packing bound through a face-centered cubic (f.c.c.) lattice arrangement. This indicates a promising potential for achieving the optimal bound within our linear programming framework.}
\begin{prop}\label{prop: 3dim}
Take \[\Lambda=\{k_1\boldsymbol{v}_1+k_2\boldsymbol{v}_2+k_3\boldsymbol{v}_3\ :\ k_1,k_2,k_3\in \mathbb{Z}\},\] where
$\boldsymbol{v}_1=[\sqrt{2},0,0]^{\mathsf{T}}$, $\boldsymbol{v}_2=[0,\sqrt{2},0]^{\mathsf{T}}$ and $\boldsymbol{v}_3=[0,0,\sqrt{2}]^{\mathsf{T}}$. Take $c_0=c_1=c_2=c_3=1$, and 
\[
\boldsymbol{a}_0=[0,0,0]^{\mathsf{T}},\qquad \boldsymbol{a}_1=[\sqrt{2}\pi,0,0]^{\mathsf{T}},\qquad \boldsymbol{a}_2=[0, \sqrt{2}\pi,0]^{\mathsf{T}}\qquad \text{and}\qquad \boldsymbol{a}_3=[0,0,\sqrt{2}\pi]^{\mathsf{T}}.
\]
Denote $g: \mathbb{R}^3\rightarrow \mathbb{R}$ as 
\[
g(\boldsymbol{x})=\sum_{k=0}^3 c_k\cos(\boldsymbol{a}_k\bm{\cdot}\boldsymbol{x}).
\]
 {Then $g\in \text{SA}_{1}$ and  $g^{\#}=\sqrt{2}$. }
\end{prop}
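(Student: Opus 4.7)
The plan is to verify, one by one, the three defining conditions of $\text{SA}_{1}$ in Definition \ref{cohn function}(2) for $g$, and then read off $g^{\#}$ directly. Rewriting
\[
g(\boldsymbol{x}) = 1 + \cos(\sqrt{2}\pi x_1) + \cos(\sqrt{2}\pi x_2) + \cos(\sqrt{2}\pi x_3)
\]
makes its $\Lambda$-periodicity (with $\Lambda = \sqrt{2}\mathbb{Z}^{3}$) immediate. The dual lattice is $\Lambda^{*} = \tfrac{1}{\sqrt{2}}\mathbb{Z}^{3}$, and setting $\boldsymbol{t}_{k} := \boldsymbol{a}_{k}/(2\pi)$ one sees that $\boldsymbol{t}_{0} = \boldsymbol{0}$ and $\boldsymbol{t}_{k} = \tfrac{1}{\sqrt{2}}\boldsymbol{e}_{k}$ for $k=1,2,3$ all lie in $\Lambda^{*}$. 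Hence the expansion $g = \sum_{\boldsymbol{t}\in\Lambda^{*}} c_{\boldsymbol{t}}\cos(2\pi\boldsymbol{t}\bm{\cdot}\boldsymbol{x})$ holds with $c_{\boldsymbol{0}} = 1 > 0$, three further coefficients equal to $1$, and the rest $0$, verifying (i)--(ii). Remark \ref{g remark} then gives $\widehat{g}(\boldsymbol{0}) = c_{\boldsymbol{0}}|\Lambda| = (\sqrt{2})^{3} = 2\sqrt{2}$, while $g(\boldsymbol{0}) = 4$, yielding $g^{\#} = 4/(2\sqrt{2}) = \sqrt{2}$.

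The real content lies in condition (iii): $g(\boldsymbol{x})\leq 0$ whenever $\|[\boldsymbol{x}]\|_{\mathbb{R}^{3}/\Lambda}\geq 1$. By $\Lambda$-periodicity I would work in the Voronoi cell $[-\tfrac{\sqrt{2}}{2},\tfrac{\sqrt{2}}{2}]^{3}$, on which the quotient norm coincides with $\|\boldsymbol{x}\|_{2}$. Introducing the half-angle variables $\phi_{i} := \tfrac{\sqrt{2}\pi}{2}x_{i} \in [-\tfrac{\pi}{2},\tfrac{\pi}{2}]$ and using $\cos(2\phi_{i}) = 1 - 2\sin^{2}\phi_{i}$, the function becomes
\[
g(\boldsymbol{x}) \;=\; 4 - 2\bigl(\sin^{2}\phi_{1} + \sin^{2}\phi_{2} + \sin^{2}\phi_{3}\bigr),
\]
and the hypothesis $\|\boldsymbol{x}\|_{2}^{2}\geq 1$ translates into $\phi_{1}^{2}+\phi_{2}^{2}+\phi_{3}^{2}\geq \pi^{2}/2$. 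Thus (iii) reduces to the trigonometric inequality
\[
\sin^{2}\phi_{1}+\sin^{2}\phi_{2}+\sin^{2}\phi_{3}\;\geq\; 2 \qquad\text{whenever}\qquad \sum_{i=1}^{3}\phi_{i}^{2}\geq \tfrac{\pi^{2}}{2},\ |\phi_{i}|\leq \tfrac{\pi}{2}.
\]

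This final inequality is the step I expect a reader to worry about most, since a direct Lagrange-multiplier analysis on the boundary of the constraint set, or a brute-force case split, looks delicate. The clean way out is Jordan's inequality: concavity of $\sin$ on $[0,\pi/2]$ (so that $\sin$ lies above its chord) gives $|\sin\phi|\geq 2|\phi|/\pi$ on $[-\pi/2,\pi/2]$, and squaring yields $\sin^{2}\phi\geq 4\phi^{2}/\pi^{2}$ throughout that interval. Summing over $i = 1,2,3$ gives
\[
\sum_{i=1}^{3}\sin^{2}\phi_{i}\;\geq\;\frac{4}{\pi^{2}}\sum_{i=1}^{3}\phi_{i}^{2}\;\geq\;\frac{4}{\pi^{2}}\cdot\frac{\pi^{2}}{2}\;=\;2,
\]
with no case analysis required. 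This closes condition (iii) and completes the proof of the proposition.
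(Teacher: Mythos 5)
Your proof is correct, and it takes a genuinely different route from the paper's for the key condition (iii). The paper changes variables to $u_i=\sqrt{2}\pi x_i$, reduces by symmetry to $[0,\pi]^3$, proves the set inclusion $\{\boldsymbol{u}\in[0,\pi]^3:\|\boldsymbol{u}\|_2>\sqrt{2}\pi\}\subset\{\boldsymbol{u}\in[0,\pi]^3:\|\boldsymbol{u}\|_1>2\pi\}$ via a convex-hull argument, then bounds $\cos u_3\leq\cos(2\pi-u_1-u_2)$ and invokes the two-dimensional Lemma \ref{lem: technical} (the same lemma that drives Proposition \ref{thm: g_22}). You instead write $g=4-2\sum_{i=1}^{3}\sin^2\phi_i$ via the double-angle formula and finish with Jordan's inequality $\sin^2\phi\geq 4\phi^2/\pi^2$ on $[-\pi/2,\pi/2]$, which converts the Euclidean constraint $\sum\phi_i^2\geq\pi^2/2$ directly into $\sum\sin^2\phi_i\geq 2$. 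Your argument is shorter and entirely self-contained — it needs neither the $\ell_1$/$\ell_2$ inclusion nor the technical lemma — and it makes the equality cases transparent (two coordinates at $\pm\sqrt{2}/2$ and one at $0$, i.e.\ exactly the twelve distance-one neighbors in the f.c.c.\ configuration). What the paper's route buys is reuse of Lemma \ref{lem: technical} across the two- and three-dimensional constructions, which makes the structural parallel between the two propositions visible; but as a proof of this proposition alone, yours is cleaner. The remaining verifications (periodicity, the Fourier expansion with $c_{\boldsymbol{0}}=1$, $\widehat{g}(\boldsymbol{0})=|\Lambda|=2\sqrt{2}$, and $g^{\#}=4/(2\sqrt{2})=\sqrt{2}$) match the paper's and are correct.
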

\begin{proof}
The proof is provided in Section \ref{dim_3: sec}. 
\end{proof}
\begin{rem}
Consider $13$ points in Figure \ref{fig2}, that is, $\boldsymbol{O}=(0,0,0)$, $\{\boldsymbol{P}_1,\boldsymbol{P}_2,\boldsymbol{P}_3,\boldsymbol{P}_4\}=(\pm \frac{\sqrt{2}}{2},\pm\frac{\sqrt{2}}{2},0)$,  $\{\boldsymbol{P}_5,\boldsymbol{P}_6,\boldsymbol{P}_7,\boldsymbol{P}_8\}=(\pm \frac{\sqrt{2}}{2},0,\pm\frac{\sqrt{2}}{2})$ and  $\{\boldsymbol{P}_9,\boldsymbol{P}_{10},\boldsymbol{P}_{11},\boldsymbol{P}_{12}\}=(0,\pm \frac{\sqrt{2}}{2},\pm\frac{\sqrt{2}}{2})$. Assume that $\boldsymbol{x}\sim \boldsymbol{y}$, when $\boldsymbol{x}-\boldsymbol{y}\in \Lambda$, where $\Lambda$ is defined in Proposition \ref{prop: 3dim}. Then the quotient space will be $\mathbb{R}^3/\Lambda$ and we can find that 
\[
\boldsymbol{P}_1\sim \boldsymbol{P}_2\sim \boldsymbol{P}_3\sim \boldsymbol{P}_4,\quad \boldsymbol{P}_5\sim \boldsymbol{P}_6\sim \boldsymbol{P}_7\sim \boldsymbol{P}_8,\quad \text{and}\quad \boldsymbol{P}_9\sim \boldsymbol{P}_{10}\sim \boldsymbol{P}_{11}\sim \boldsymbol{P}_{12}.
\]
Since $\|\boldsymbol{O}\boldsymbol{P}_i\|_2=1$, for any $i=1,\ldots,12$, and $\|\boldsymbol{P}_{i}\boldsymbol{P}_{j}\|_2\geq 1$, for  any $1\leq i<j\leq 12$, we can take $[\boldsymbol{O}],[\boldsymbol{P}_1],[\boldsymbol{P}_5],[\boldsymbol{P}_9]$ as the center points of the radius-$\frac{1}{2}$ sphere packing in $3$-dimensional case. For a geometric perspective, if we consider the intersection of $B(\boldsymbol{P},\frac{1}{2})$ with the cube $\left[-\frac{\sqrt{2}}{2},\frac{\sqrt{2}}{2}\right]^3$ (where $B(\boldsymbol{P},\frac{1}{2})$ is a radius-$\frac{1}{2}$ sphere centered at $\boldsymbol{P}$), we have:
\[
B(\boldsymbol{O},\frac{1}{2})\cap \left[-\frac{\sqrt{2}}{2},\frac{\sqrt{2}}{2}\right]^3=B(\boldsymbol{O},\frac{1}{2}),\quad and\quad  B(\boldsymbol{P}_i,\frac{1}{2})\cap \left[-\frac{\sqrt{2}}{2},\frac{\sqrt{2}}{2}\right]^3=\frac{1}{4}B(\boldsymbol{P}_i,\frac{1}{2}),
\]
for $i=1,\ldots, 12$, where $\frac{1}{4}B(\boldsymbol{P}_i,\frac{1}{2})$ represents a quarter sphere with radius of $1/2$ and the center $\boldsymbol{P}_i$. Therefore, there are $1+12\times \frac{1}{4}=4$ radius-$\frac{1}{2}$ spheres in the cube $\left[-\frac{\sqrt{2}}{2},\frac{\sqrt{2}}{2}\right]^3$, and the center density in the $3$-dimensional case should be 
\begin{equation}\label{eqn: dela_3_lower}
\delta_3\geq \frac{1}{2^3} \cdot \frac{4}{2\sqrt{2}}=\frac{1}{4\sqrt{2}}.
\end{equation}
As we can see, this type of periodic packing corresponds to  the f.c.c. lattice arrangement discussed  in Kepler's conjecture \cite{nature}. 
\begin{figure}[htbp]
	\centering
	{\includegraphics[width=0.4\textwidth]{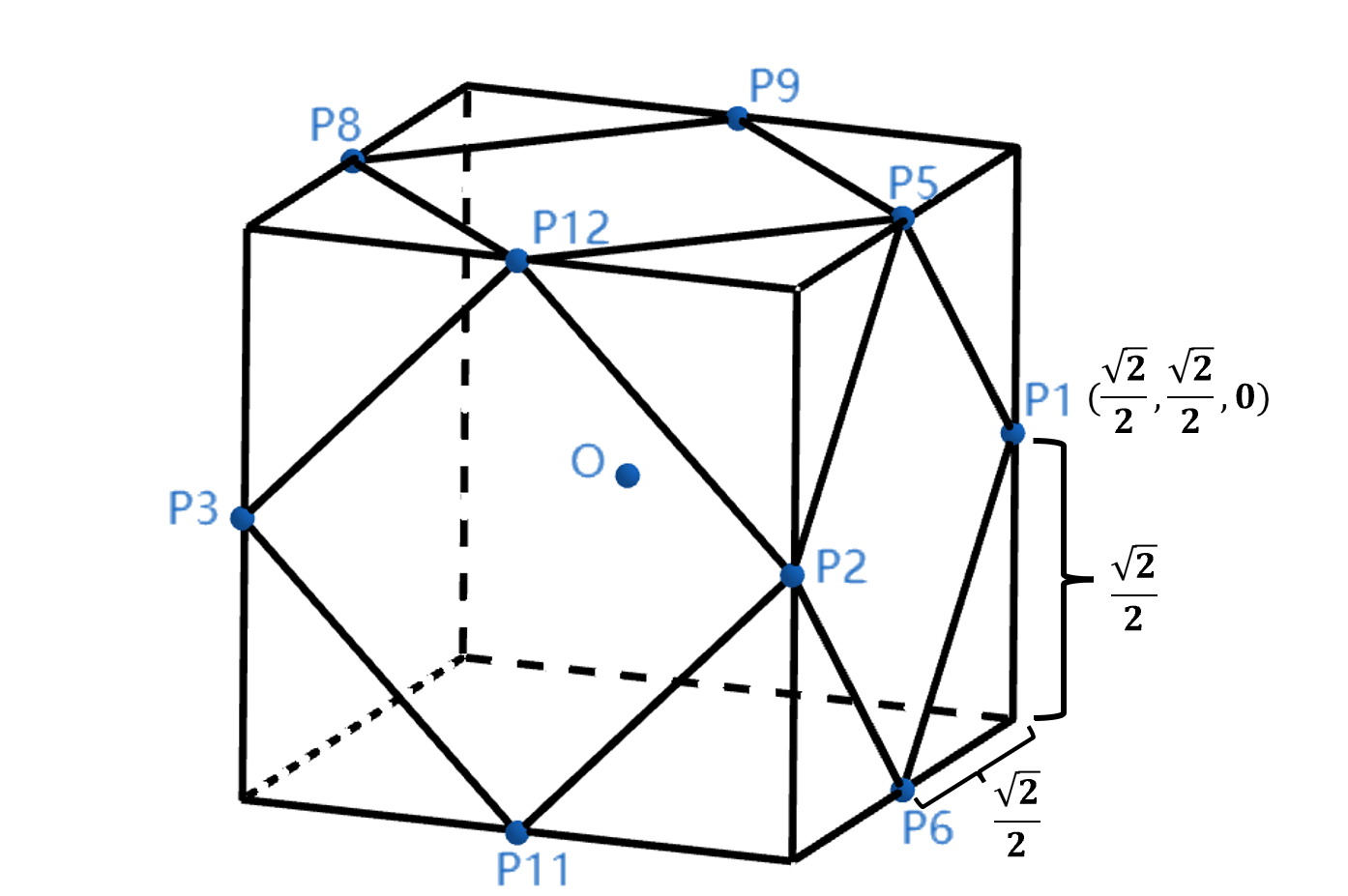}}
	\caption{Specific radius-$\frac{1}{2}$ sphere packing in $3$-dimensional case}
	\label{fig2}
\end{figure}

On the other hand, we also have  $\frac{1}{2^3}\cdot g^{\#}=\frac{1}{4\sqrt{2}}$.
It meets the lower bound of $\delta_3$ in (\ref{eqn: dela_3_lower}). It implies that we have already constructed a good auxiliary function in $\text{SA}_1$. 
\end{rem}

Proposition \ref{thm: one_dim_auxiliary}, Proposition \ref{thm: g_22} and Proposition \ref{prop: 3dim} offer valuable insights into the construction of effective auxiliary functions $\{g_m\}_{m\in \mathbb{N}^{+}}\in \text{SSA}$, particularly in two and three dimensional cases. Therefore,  based on Theorem \ref{key1},  there is a promising outlook for solving the sphere packing problem in  two and three dimensions, and potentially for  $n=4,\ldots,9$, under optimal lattice arrangements.

\section{Proofs}
\subsection{Proof of Theorem \ref{Tf}}\label{prof Tf}
\begin{proof}[Proof of Theorem \ref{Tf}]
(1) In order to prove $Tf\in \text{SSA}$, it is enough to prove that $f_m\in \text{SA}_{m}$, for any $m\in \mathbb{N}^{+}$.
Now we should check the conditions (i), (ii) and (iii) for $\text{SSA}_{m}$ in Definition \ref{cohn function} one by one:

Based on the definition of $f_m$, the periodic condition in (i) can be directly got.

According to the Poisson summation formula, we have
\begin{equation}\label{possion}
f_m(\boldsymbol{x})=\sum_{\boldsymbol{t}\in \Lambda_m}f(\boldsymbol{x}+\boldsymbol{t})=\frac{1}{|\Lambda_m|}\sum_{\boldsymbol{t}\in \Lambda_m^*}e^{-2\pi \mathrm{i}\boldsymbol{x}\bm{\cdot}\boldsymbol{t}}\widehat{f}(\boldsymbol{t})=\mathscr{R}e\left(\frac{1}{|\Lambda_m|}\sum_{\boldsymbol{t}\in \Lambda_m^*}e^{-2\pi \mathrm{i}\boldsymbol{x}\bm{\cdot}\boldsymbol{t}}\widehat{f}(\boldsymbol{t})\right)=\sum_{\boldsymbol{t}\in\Lambda_m^*}c_{\boldsymbol{t}}\cos(2\pi \boldsymbol{t}\bm{\cdot}\boldsymbol{x}),
\end{equation}
where $c_{\boldsymbol{t}}=\frac{1}{|\Lambda_m|}\widehat{f}(\boldsymbol{t})$ for any $\boldsymbol{t}\in \Lambda_m^*$.
Since $\widehat{f}(\boldsymbol{t})\geq 0$ for any $\boldsymbol{t}\in \Lambda_m^*$ and $\widehat{f}(\boldsymbol{0})> 0$, we have  $c_{\boldsymbol{t}}\geq 0$ for any $\boldsymbol{t}\in \Lambda_m^*$ and $c_{\boldsymbol{0}}> 0$. Besides, since $f\in \text{SA}$, $f$ is admissible and  both sides of (\ref{possion}) converge absolutely.
Taking $\boldsymbol{x}=\boldsymbol{0}$ in (\ref{possion}), we have $\sum_{\boldsymbol{t}\in\Lambda_m^*}c_{\boldsymbol{t}}<\infty$.  Thus (ii) holds.

Moreover, if $\|[\boldsymbol{x}]\|_{\mathbb{R}^n/\Lambda_m}\geq 1$, then for any $\boldsymbol{t}\in \Lambda_m$, we have $\|\boldsymbol{x}+\boldsymbol{t}\|_2\geq 1$. Since $f(\boldsymbol{x})\leq 0$ for any $\|\boldsymbol{x}\|_2\geq 1$, when $\|[\boldsymbol{x}]\|_{\mathbb{R}^n/\Lambda_m}\geq 1$, we have
\[
f_m(\boldsymbol{x})=\sum_{\boldsymbol{t}\in \Lambda_m}f(\boldsymbol{x}+\boldsymbol{t})\leq 0.
\]
Thus we can get (iii).

(2) Since $f$ is admissible, then
\[\underset{m\rightarrow \infty}{\lim}f_m(\boldsymbol{0})=f(\boldsymbol{0})+\underset{m\rightarrow \infty}{\lim}\sum_{\boldsymbol{t}\in \Lambda_m\backslash  \{\boldsymbol{0}\}}f(\boldsymbol{t})=f(\boldsymbol{0}).\]

On the other hand, we can directly have
\[
\widehat{f}_m(\boldsymbol{0})=\int_{\mathcal{P}(\Lambda_m)}f_m(\boldsymbol{x})d\boldsymbol{x}=\int_{\mathcal{P}(\Lambda_m)}\sum_{\boldsymbol{t}\in \Lambda_m}f(\boldsymbol{x}+\boldsymbol{t})d\boldsymbol{x}=\int_{\mathbb{R}^n}f(\boldsymbol{x})d\boldsymbol{x}=\widehat{f}(\boldsymbol{0}).
\]
Thus
\[
\{f_m\}_{m\in \mathbb{N}^{+}}^{\#}=\underset{m\rightarrow \infty}{\underline{\mathrm{lim}}}\frac{f_m(\boldsymbol{0})}{\widehat{f}_m(\boldsymbol{0})}=\frac{f(\boldsymbol{0})}{\widehat{f}(\boldsymbol{0})}={f}^{\#},
\]
which leads to the conclusion.
\end{proof}

\subsection{Proof of Theorem \ref{key1}}
\label{proof1}
\begin{proof}[Proof of Theorem \ref{key1}]
Periodic packing is arbitrarily close to the greatest packing
density \cite[Appendix A]{Cohn}. Therefore, it is enough to consider periodic packing under lattice $\mathbb{R}^n/\Lambda_m$ with $m$ large enough. Assume
that there exist $N$ nodes $\boldsymbol{x}_{1},\cdots,\boldsymbol{x}_{N}\in \mathbb{R}^n/\Lambda_m$
such that $\|[\boldsymbol{x}_i]-[\boldsymbol{x}_j]\|_{\mathbb{R}^{n}/{\Lambda_{m}}}\geq1$ for any $i\neq j$. Then $\{\boldsymbol{x}_{j}\}_{j=1}^{N}$ can be
considered as the centers of spheres in $\mathbb{R}^n/{\Lambda_{m}}$ with radius-$1/2$. Therefore,
the center density respect to $\Lambda_m$ is upper bounded by
\[
\frac{N}{2^{n}|\Lambda_m|}.
\]
For any fixed $\{g_m\}_{m\in \mathbb{N}^{+}}\in \text{SSA}$, we have  $g_m\in \text{SA}_m$, $m=1,2,\ldots.$ By the definition of $\text{SA}_m$, there exist $c_{\boldsymbol{t}}$, $\boldsymbol{t}\in\Lambda_m^*$, such that $c_{\boldsymbol{0}}> 0$, $c_{\boldsymbol{t}}\geq  0$, { $\sum_{\boldsymbol{t}\in \Lambda_{m}^*} c_{\boldsymbol{t}}<\infty$}, and  $
g_{m}(\boldsymbol{x})= \sum_{\boldsymbol{t}\in \Lambda_m^*}c_{\boldsymbol{t}}\cos(2\pi\boldsymbol{t} \bm{\cdot} \boldsymbol{x})$ for all $\boldsymbol{x}\in \mathbb{R}^n$. Denote
\[
S_{m}(\boldsymbol{t})=\sum_{1\leq i,j\leq N}g_{m}(\boldsymbol{t}+\boldsymbol{x}_{i}-\boldsymbol{x}_{j}).
\]
 Since $\|[\boldsymbol{x}_i]-[\boldsymbol{x}_j]\|_{\mathbb{R}^{n}/{\Lambda_{m}}}=\|[\boldsymbol{x}_i-\boldsymbol{x}_j]\|_{\mathbb{R}^{n}/{\Lambda_{m}}}\geq1$ for any
$i\neq j$, according to the condition (iii) for $\text{SA}_m$,
we have $g_{m}(\boldsymbol{x}_{i}-\boldsymbol{x}_{j})\leq0$. Therefore,
\begin{equation}
S_{m}(\boldsymbol{0})=\sum_{1\leq i,j\leq N}g_{m}(\boldsymbol{x}_{i}-\boldsymbol{x}_{j})=\sum_{i=j}g_{m}(\boldsymbol{x}_{i}-\boldsymbol{x}_{j})+\sum_{i\neq j}g_{m}(\boldsymbol{x}_{i}-\boldsymbol{x}_{j})\leq Ng_{m}(\boldsymbol{0}).\label{eq: one_dim_temp1-1}
\end{equation}
On the other hand, by the condition (ii) for $\text{SA}_m$, we also have
\begin{align}
S_{m}(\boldsymbol{0}) & = \sum_{1\leq i,j\leq N}\sum_{\boldsymbol{t}\in \Lambda_m^*}c_{\boldsymbol{t}}\cos(2\pi\boldsymbol{t} \bm{\cdot} (\boldsymbol{x}_i-\boldsymbol{x}_j))=\sum_{\boldsymbol{t}\in \Lambda_m^*}{c_{\boldsymbol{t}}}\sum_{1\leq i,j\leq N}\cos(2\pi{\boldsymbol{t}}\bm{\cdot}(\boldsymbol{x}_{i}-\boldsymbol{x}_{j}))\nonumber \\
 & =N^{2}c_{\boldsymbol{0}}+\sum_{\boldsymbol{t}\in \Lambda_m^*\backslash\{\boldsymbol{0}\}}c_{\boldsymbol{t}}\sum_{1\leq i,j\leq N}\cos(2\pi{\boldsymbol{t}}\bm{\cdot}(\boldsymbol{x}_{i}-\boldsymbol{x}_{j})).\label{eq: one_dimen_temp2-1}
\end{align}
For any fixed $\boldsymbol{t}\in \Lambda_{m}^*$, we have

\[
\sum_{1\leq i,j\leq N}e^{2\pi\mathrm{i}\boldsymbol{t}\bm{\cdot}(\boldsymbol{x}_{i}-\boldsymbol{x}_{j})}=\sum_{1\leq i\leq N}\sum_{1\leq j\leq N}e^{2\pi\mathrm{i}\boldsymbol{t}\bm{\cdot}\boldsymbol{x}_{i}}\cdot e^{-2\pi\mathrm{i}\boldsymbol{t}\bm{\cdot}\boldsymbol{x}_{j}}=\big|\sum_{1\leq i\leq N}e^{2\pi\mathrm{i} \boldsymbol{t}\bm{\cdot}\boldsymbol{x}_{i}}\big|^{2}.
\]
Then
\begin{equation}
\sum_{1\leq i,j\leq N}\cos(2\pi\boldsymbol{t}\bm{\cdot}(\boldsymbol{x}_{i}-\boldsymbol{x}_{j}))=\sum_{1\leq i,j\leq N}\mathscr{R}e(e^{2\pi\mathrm{i}\boldsymbol{t}\bm{\cdot}(\boldsymbol{x}_{i}-\boldsymbol{x}_{j})})\geq0.\label{eq: one_dimen_temp3-2}
\end{equation}
Plugging (\ref{eq: one_dimen_temp3-2}) into (\ref{eq: one_dimen_temp2-1}),
we can get that
\begin{equation}
S_{m}(\boldsymbol{0})\geq N^{2}c_{\boldsymbol{0}}.\label{eq: one_dimen_temp4-2}
\end{equation}
Based on (\ref{eq: one_dim_temp1-1}) and (\ref{eq: one_dimen_temp4-2}),
we have
\[
N\leq\frac{g_{m}(\boldsymbol{0})}{c_{\boldsymbol{0}}}=\frac{g_{m}(\boldsymbol{0})|\Lambda_m|}{\widehat{g}_m(\boldsymbol{0})},
\]
as $\widehat{g}_m(\boldsymbol{0})=|\Lambda_m|c_{\boldsymbol{0}}$. Thus for any fixed $\{g_m\}_{m\in \mathbb{N}^{+}}$, we have
\[
\delta_n\leq \underset{m\rightarrow\infty}{ \underline{\text{lim}}}\frac{N}{2^{n}|\Lambda_m|}=\underset{m\rightarrow\infty}{ \underline{\text{lim}}}\frac{g_{m}(\boldsymbol{0})}{\widehat{g}_m(\boldsymbol{0})}=\{g_m\}_{m\in \mathbb{N}^{+}}^{\#},
\]
which leads to the conclusion.
\end{proof}

\subsection{Proof of Proposition \ref{thm: one_dim_auxiliary}}
\label{proofs}
\begin{proof}[Proof of proposition \ref{thm: one_dim_auxiliary}]
The proof can separated by the following three steps:

\textbf{Step 1}: Establish that $g_m$ can be expressed as
\begin{equation}
g_{m}(x)=\sum_{k=0}^{m-2}c_{k}\cos\frac{2k\pi x}{m},\label{eq: g_m}
\end{equation}
where $c_{k}>0$, $k=0,\ldots,m-2$. 

 If $g_m$ can be expressed in (\ref{eq: g_m}), we can directly get that $g_m(x)=g_m(x+v)$ for any $v\in m\mathbb{Z}$ and the condition (i) and condition (ii) for $\text{SA}_m$ hold true. Now the thing left is to identify the equation form (\ref{eq: g_m}). Since $\overline{\zeta_{m}}=(\zeta_{m})^{m-1}$ and
$z^{m}-1=\prod_{k=0}^{m-1}(z-(\zeta_{m})^{k})$, then $h_{m}$ can be
represented as
\begin{equation}
h_{m}(z)=\frac{(z-\zeta_{m})(z-\overline{\zeta_{m}})\prod_{k=2}^{m-2}(z-(\zeta_{m})^{k})^{2}}{z^{m-2}}=\sum_{k=-m+2}^{m-2}\widetilde{c}_{k}z^{k},\label{eq: f_sum_representation}
\end{equation}
where the parameters $\widetilde{c}_{k}$ will be estimated later.

By some direct calculations, we have
\begin{align}
\frac{1}{(z-1)^{2}(z-\zeta_{m})(z-\overline{\zeta_{m}})}= & \frac{1}{(\zeta_{m}-1)(\overline{\zeta_{m}}-1)}\cdot\frac{1}{(z-1)^{2}}+\frac{\zeta_{m}+\overline{\zeta_{m}}-2}{(\zeta_{m}-1)^{2}(\overline{\zeta_{m}}-1)^{2}}\cdot\frac{1}{z-1}\label{eq: 1-z_temp}\\
 & +\frac{1}{(\zeta_{m}-\overline{\zeta_{m}})(\zeta_{m}-1)^{2}}\cdot\frac{1}{z-\zeta_{m}}-\frac{1}{(\zeta_{m}-\overline{\zeta_{m}})(\overline{\zeta_{m}}-1)^{2}}\cdot\frac{1}{z-\overline{\zeta_{m}}}.\nonumber
\end{align}
Furthermore, for any fixed $z_{0}\in\mathbb{C}$, when $z$ satisfies
$|z|<|z_{0}|$, we have
\begin{equation}
\frac{1}{z-z_{0}}=-\sum_{k=0}^{\infty}\frac{1}{(z_{0})^{k+1}}z^{k}\qquad\text{and}\qquad\frac{1}{(z-z_{0})^{2}}=\sum_{k=0}^{\infty}\frac{k+1}{(z_{0})^{k+2}}z^{k}.\label{eq: 1-z_expansion}
\end{equation}
Plugging (\ref{eq: 1-z_expansion}) into (\ref{eq: 1-z_temp}) with
$z_{0}=1$, $\zeta_{m}$ and $\overline{\zeta_{m}}$, it obtains that
\begin{align}
\small
 & \frac{1}{(z-1)^{2}(z-\zeta_{m})(z-\overline{\zeta_{m}})}\nonumber \\
= & \sum_{k=0}^{\infty}\left(\frac{k+1}{(\zeta_{m}-1)(\overline{\zeta_{m}}-1)}-\frac{\zeta_{m}+\overline{\zeta_{m}}-2}{(\zeta_{m}-1)^{2}(\overline{\zeta_{m}}-1)^{2}}\right)z^{k}\nonumber \\
 &+ \sum_{k=0}^{\infty}\left(-\frac{1}{(\zeta_{m}-\overline{\zeta_{m}})(\zeta_{m}-1)^{2}(\zeta_{m})^{k+1}}+\frac{1}{(\zeta_{m}-\overline{\zeta_{m}})(\overline{\zeta_{m}}-1)^{2}(\overline{\zeta_{m}})^{k+1}}\right)z^{k}\nonumber \\
= & \sum_{k=0}^{\infty}\left(\frac{k+1}{2-2\cos\frac{2\pi}{m}}+\frac{1}{2-2\cos\frac{2\pi}{m}}+\frac{-(\overline{\zeta_{m}}-1)^{2}(\overline{\zeta_{m}})^{k+1}+(\zeta_{m}-1)^{2}\cdot\zeta_{m}^{k+1}}{(\zeta_{m}-\overline{\zeta_{m}})(\zeta_{m}-1)^{2}(\overline{\zeta_{m}}-1)^{2}(\zeta_{m})^{k+1}(\overline{\zeta_{m}})^{k+1}}\right)z^{k}\nonumber \\
= & \sum_{k=0}^{\infty}\left(\frac{k+2}{2-2\cos\frac{2\pi}{m}}-\frac{\sin\frac{2(k+2)\pi}{m}}{(2-2\cos\frac{2\pi}{m})\sin\frac{2\pi}{m}}\right)z^{k} \nonumber\\
=&\sum_{k=0}^{\infty}\frac{1}{(2-2\cos\frac{2\pi}{m})\sin\frac{2\pi}{m}}\left((k+2)\sin\frac{2\pi}{m}-\sin\frac{2(k+2)\pi}{m}\right)z^{k}.\label{eq: 1-z_final}
\end{align}
Take
\begin{equation}
s_{k}=\frac{1}{(2-2\cos\frac{2\pi}{m})\sin\frac{2\pi}{m}}\cdot\left((k+2)\sin\frac{2\pi}{m}-\sin\frac{2(k+2)\pi}{m}\right).\label{eq:s_k}
\end{equation}
When $|z|<1$, based on (\ref{eq: 1-z_final}), $h_{m}$ can also
be taken as
\begin{align}
h_{m}(z) & =\frac{1}{z^{m-2}}\cdot(z^{m}-1)^{2}\cdot\frac{1}{(z-1)^{2}(z-\zeta_{m})(z-\overline{\zeta_{m}})}\nonumber\\
&=\frac{1}{z^{m-2}}\cdot(z^{2m}-2z^{m}+1)\cdot\frac{1}{(z-1)^{2}(z-\zeta_{m})(z-\overline{\zeta_{m}})}\nonumber \\
 & =\frac{1}{z^{m-2}}\cdot(z^{2m}-2z^{m}+1)\cdot\sum_{k=0}^{\infty}s_{k}z^{k}=\frac{1}{z^{m-2}}\left(\sum_{k=0}^{\infty}s_{k}z^{k}-2\sum_{k=m}^{\infty}s_{k-m}z^{k}+\sum_{k=2m}^{\infty}s_{k-2m}z^{k}\right)\nonumber \\
 & =\frac{1}{z^{m-2}}\left(\sum_{k=0}^{m-1}s_{k}z^{k}+\sum_{k=m}^{2m-1}(s_{k}-2s_{k-m})z^{k}+\sum_{k=2m}^{\infty}(s_{k}-2s_{k-m}+s_{k-2m})z^{k}\right).\label{eq:f_another}
\end{align}
By the definition of $h_m(z)$, we have 
\[
h_m(z)=\frac{(z^m-1)^2/z^m}{\frac{(z-1)^2}{z}\cdot \frac{(z-\zeta_m)(z-\overline{\zeta_m})}{z}}=\frac{z^m+\frac{1}{z^m}-2}{(z+\frac{1}{z}-2)(z+\frac{1}{z}-2(\zeta_m+\overline{\zeta_m}))}.
\]
Therefore, 
\[
h(z)=h(1/z).
\]
Combined with (\ref{eq: f_sum_representation}), we have 
\begin{equation}\label{eq:widetilde ck}
\widetilde{c}_{k}=\widetilde{c}_{-k}=s_{k+m-2}
\end{equation}
for $k=-m+2,\ldots,0$, where $s_{k}$ is defined in (\ref{eq:s_k}).

Furthermore, when $k\in\{-m+2,\dots,0\}$, we have
\begin{align*}
\widetilde{c}_{k} & =s_{k+m-2}=\frac{1}{(2-2\cos\frac{2\pi}{m})\sin\frac{2\pi}{m}}\left((k+m)\sin\frac{2\pi}{m}-\sin\frac{2(k+m)\pi}{m}\right)\\
 & =\frac{1}{(2-2\cos\frac{2\pi}{m})\sin\frac{2\pi}{m}}\sum_{l=1}^{k+m-1}\left(\sin\frac{2\pi}{m}+\sin\frac{2l\pi}{m}-\sin\frac{2\pi(l+1)}{m}\right)\\
 & =\frac{1}{(2-2\cos\frac{2\pi}{m})\sin\frac{2\pi}{m}}\sum_{l=1}^{k+m-1}4\sin\frac{\pi}{m}\sin\frac{l\pi}{m}\sin\frac{(l+1)\pi}{m}\geq\frac{4\sin^2\frac{\pi}{m}\sin\frac{2\pi}{m}}{(2-2\cos\frac{2\pi}{m})\sin\frac{2\pi}{m}}>0.
\end{align*}
The inequality follows from $\frac{l\pi}{m},\frac{(l+1)\pi}{m}\in (0,\pi]$,
when $l\in[1,m-1]$. Therefore, based on  $\widetilde{c}_{k}=\widetilde{c}_{-k}> 0$
for $k\in\{-m+2,\ldots,m-2\}$, we can conclude that
\[
g_{m}(x)=h_{m}\left(e^{\mathrm{i}\frac{2\pi x}{m}}\right)=\sum_{k=-m+2}^{m-2}\widetilde{c}_{k}e^{\mathrm{i}\frac{2k\pi x}{m}}=\widetilde{c}_{0}+\sum_{k=1}^{m-2}2\widetilde{c}_{k}\cos\frac{2k\pi x}{m}=\sum_{k=0}^{m-2}{c}_{k}\cos\frac{2k\pi x}{m}
\]
with positive $\{{c}_{k}\}_{k=0}^{m-2}$, where ${c}_{k}=2\widetilde{c}_{k}$
for $k=1,\ldots,m-2$ and $c_{0}=\widetilde{c}_{0}$.

\textbf{Step 2}: Show that, if $x\in \mathbb{R}$ such that $\|[x]\|_{\mathbb{R}/m\mathbb{Z}}\geq 1$, then $g_{m}(x)\leq0$.

 $h_m$ can be rewritten as
\[
\begin{split}
h_m(z)=&\frac{(z^m-1)^2}{z^m}\cdot\frac{z}{(z-1)^2}\cdot \frac{z}{(z-\zeta_m)(z-\overline{\zeta_m})}\\
=&(z^m+z^{-m}-2)\cdot \frac{1}{z+z^{-1}-2}\cdot \frac{1}{z+z^{-1}-(\zeta_m+\overline{\zeta_m})}.
\end{split}
\]
Then we have
\begin{equation}\label{fm_new}
h_m(e^{\mathrm{i}\frac{2\pi x}{m}})=\frac{\cos(2\pi x)-1}{2(\cos\frac{2\pi x}{m}-1)(\cos \frac{2\pi x}{m}-\cos\frac{2\pi}{m})}.
\end{equation}
Therefore, if  $x\in (-\frac{m}{2},-1)\cup(1,\frac{m}{2})$ and $x\notin \mathbb{Z}$, it obtains that $h_m(e^{\mathrm{i}\frac{2\pi x}{m}})<0$.
Since $g_m(x)=h_m(e^{\mathrm{i}\frac{2\pi x}{m}})$ when $x\notin \mathbb{Z}$ and $g_m$ is continuous, we can directly conclude that $g_m(x)\leq 0$ for any $x$ such that $\|[x]\|_{\mathbb{R}/m\mathbb{Z}}\geq 1$. Therefore, the condition (iii) for $SA_m$ can be achieved. 

\textbf{Step 3}: Prove that $
\{g_m\}_{m\in \mathbb{N}^{+}}^{\#}=1.
$

Since $g_m$ is continuous, we have
\[
g_{m}(0)=\underset{x\rightarrow 0}{\text{lim}}h_m(e^{\mathrm{i}\frac{2\pi x}{m}})=\underset{z\rightarrow 1}{\text{lim}}h_{m}(z)=\frac{m^{2}}{2-2\cos\frac{2\pi}{m}}.
\]
Furthermore, since $c_{0}=\widetilde{c}_{0}=s_{m-2}$ with $\widetilde{c}_{0}$ and $s_{m-2}$
defined in (\ref{eq:widetilde ck}) and (\ref{eq:s_k}). Therefore, we can obtain that
\[
\widehat{g}_m(0)=mc_{0}=m\widetilde{c}_{0}=ms_{m-2}=\frac{m^2\sin\frac{2\pi}{m}}{(2-2\cos\frac{2\pi}{m})\sin\frac{2\pi}{m}}=\frac{m^2}{2-2\cos\frac{2\pi}{m}}.
\]
 Thus we can conclude that
\[
\{g_m\}_{m\in \mathbb{N}^{+}}^{\#}=\underset{m\rightarrow \infty}{\underline{\lim}}\frac{g_m (0)}{\widehat{g}_m (0)}=1.
\]
\end{proof}

\subsection{Proof of Proposition \ref{true_set}}\label{sec: true_set}
\begin{proof}[Proof of Proposition \ref{true_set}]
It is enough to prove that there is no function $f\in \text{SA}$ such that 
\begin{equation}\label{eqn: g_m_temp_temp}
\alpha_mg_m(x)=\sum_{k=-\infty}^{+\infty}f(x+mk),
\end{equation}
for any $m\in \{3,4,\ldots\}$.

If (\ref{eqn: g_m_temp_temp}) holds true, we have
\begin{equation}\label{c_k_temp}
\begin{split}
\int_{-m/2}^{m/2}g_m(x)\cos\left(\frac{2\pi l x}{m}\right)dx&=\frac{1}{\alpha_m}\int_{-m/2}^{m/2}\sum_{k=-\infty}^{+\infty}f(x+mk)\cos\left(\frac{2\pi l x}{m}\right)dx\\
&=\frac{1}{\alpha_m}\sum_{k=-\infty}^{+\infty}\int_{-m/2}^{m/2}f(x+mk)\cos\left(\frac{2\pi l x}{m}\right)dx\\
&=\frac{1}{\alpha_m}\sum_{k=-\infty}^{+\infty}\int_{-m/2+mk}^{m/2+mk}f(x)\cos\left(\frac{2\pi l x}{m}\right)dx\\
&=\frac{1}{\alpha_m}\int_{-\infty}^{\infty}f(x)\cos\left(\frac{2\pi l x}{m}\right)dx=\frac{1}{\alpha_m}\widehat{f}\left(\frac{l}{m}\right)
\end{split}
\end{equation}
for any $l\in \mathbb{Z}$.

According to Step 1 in the proof of Proposition \ref{thm: one_dim_auxiliary}, for any $m=3,4,\ldots$, $g_{m}(x)$ can be rewritten as $g_m(x)=\sum_{k=0}^{m-1}c_{k}\cos\frac{2k\pi x}{m}$ with $c_{m-2}>0$ and $c_{m-1}=0$.
Based on (\ref{c_k_temp}), $c_{m-2}$ and $c_{m-1}$ can also be expressed as
\begin{equation}\label{c_m_temp1}
c_{m-2}=\frac{2}{m}\int_{-m/2}^{m/2}g_m(x)\cos\left(\frac{2(m-2)\pi x}{m}\right)dx=\frac{2}{m\cdot\alpha_m}\widehat{f}\left(\frac{m-2}{m}\right)\neq 0,
\end{equation}
and
\begin{equation}\label{c_m_temp2}
c_{m-1}=\frac{2}{m}\int_{-m/2}^{m/2}g_m(x)\cos\left(\frac{2(m-1)\pi x}{m}\right)dx=\frac{2}{m\cdot \alpha_m}\widehat{f}\left(\frac{m-1}{m}\right)=0.
\end{equation}
Take $m_1=2m$.  According to (\ref{c_m_temp1}), we can get that
\[
\widehat{f}\left(\frac{m_1-2}{m_1}\right)\neq 0.
\]
However, based on (\ref{c_m_temp2}), we also have
\[
\widehat{f}\left(\frac{m_1-2}{m_1}\right)=\widehat{f}\left(\frac{m-1}{m}\right)=0,
\]
which leads to a contradiction.
\end{proof}

\subsection{Proof of Proposition \ref{thm: g_22}}\label{sec: Proof of Proposition 8}
First of all,  we show a technical lemma described as below.
\begin{lem}
\label{lem: technical} Define the function $f$ as
\begin{equation}
f(x,y)=1+\cos\pi x+\cos\pi y+\cos\pi(x+y).\label{eq:f_lemma}
\end{equation}
Then for any $(x,y)\in\mathcal{D}$, we have $f(x,y)\leq0$, where
\begin{equation}\label{eq: X}
\mathcal{D}=\{(x_0+2\mathbb{Z},y_0+2\mathbb{Z})\ :\ |x_0+y_0|\geq1\ \text{and}\ -1\leq x_0,y_0\leq1\}.
\end{equation}
\end{lem}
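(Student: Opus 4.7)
The plan is to exploit the $2$-periodicity in each variable to reduce to a bounded domain, then rewrite $f(x,y)$ in fully factored trigonometric form so the sign on the relevant region is immediate.

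First, I would observe that each of $\cos \pi x$, $\cos \pi y$, and $\cos \pi(x+y)$ is invariant under $x \mapsto x+2$ and $y \mapsto y+2$, so $f$ is doubly $2$-periodic. Hence it suffices to prove $f(x_0,y_0)\le 0$ under the constraint
\[
-1\le x_0,y_0\le 1,\qquad |x_0+y_0|\ge 1.
\]

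The main step is to apply sum-to-product twice. Writing $1+\cos\pi y = 2\cos^2(\pi y/2)$ and $\cos\pi x + \cos\pi(x+y) = 2\cos\pi(x+y/2)\cos(\pi y/2)$, then collecting a common factor $2\cos(\pi y/2)$ and applying the sum-to-product identity once more to $\cos(\pi y/2)+\cos\pi(x+y/2)$, one obtains the clean factorization
\[
f(x,y) \;=\; 4\cos\!\frac{\pi x}{2}\,\cos\!\frac{\pi y}{2}\,\cos\!\frac{\pi(x+y)}{2}.
\]
(This is the computational heart of the argument, but it is a short identity rather than a genuine obstacle.)

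Finally I would read off the signs on the reduced domain: since $-1\le x_0\le 1$ gives $\pi x_0/2\in[-\pi/2,\pi/2]$, we have $\cos(\pi x_0/2)\ge 0$, and similarly $\cos(\pi y_0/2)\ge 0$. For the third factor, the hypothesis $|x_0+y_0|\ge 1$ combined with $|x_0+y_0|\le 2$ places $\pi(x_0+y_0)/2$ in $[-\pi,-\pi/2]\cup[\pi/2,\pi]$, on which cosine is $\le 0$. The product of two nonnegative numbers and one nonpositive number is $\le 0$, which yields $f(x_0,y_0)\le 0$ as required. The only potential pitfall is making the periodic reduction precise (ensuring one can always choose representatives $(x_0,y_0)\in[-1,1]^2$ that still satisfy $|x_0+y_0|\ge 1$), but this follows directly from the description of $\mathcal{D}$ in \eqref{eq: X}.
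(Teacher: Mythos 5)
Your proof is correct and takes essentially the same route as the paper: both rest on the factorization $f(x,y)=4\cos\frac{\pi x}{2}\cos\frac{\pi y}{2}\cos\frac{\pi(x+y)}{2}$ followed by a sign analysis, the only difference being that the paper derives the identity by expanding $(1+z_1)(1+z_2)(1+\frac{1}{z_1z_2})$ with $z_1=e^{\mathrm{i}\pi x}$, $z_2=e^{\mathrm{i}\pi y}$ while you use sum-to-product identities. Your explicit reduction to representatives in $[-1,1]^2$ via the $2$-periodicity of $f$ is, if anything, slightly more careful than the paper's statement of the sign on all of $\mathcal{D}$.
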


\begin{proof}
Taking $z_{1}=e^{\mathrm{i}\pi x}$ and $z_{2}=e^{\mathrm{i}\pi y}$,
by some direct calculations,  we have
\begin{align*}
(1+z_{1})(1+z_{2})(1+\frac{1}{z_{1}z_{2}}) & =(1+e^{\mathrm{i}\pi x})(1+e^{\mathrm{i}\pi y})(1+e^{-\mathrm{i}\pi(x+y)})\\
 & =(e^{\mathrm{i\frac{\pi x}{2}}}+e^{\mathrm{-i\frac{\pi x}{2}}})(e^{\mathrm{i\frac{\pi y}{2}}}+e^{\mathrm{-i\frac{\pi y}{2}}})(e^{\mathrm{i\frac{\pi(x+y)}{2}}}+e^{\mathrm{-i\frac{\pi(x+y)}{2}}})\\
 & =8\cos\frac{\pi x}{2}\cdot\cos\frac{\pi y}{2}\cdot\cos\frac{\pi(x+y)}{2}\leq0,
\end{align*}
for any $(x,y)\in\mathcal{D}$.

Furthermore, we can get that
\begin{align*}
(1+z_{1})(1+z_{2})(1+\frac{1}{z_{1}z_{2}}) & =2+(z_{1}+\frac{1}{z_{1}})+(z_{2}+\frac{1}{z_{2}})+(z_{1}z_{2}+\frac{1}{z_{1}z_{2}})\\
 & =2(1+\cos\pi x+\cos\pi y+\cos\pi(x+y))=2f(x,y).
\end{align*}
Therefore, $f(x,y)\leq0$, for any $(x,y)\in\mathcal{D}$.
\end{proof}
Now we begin to prove Proposition \ref{thm: g_22}.

\begin{proof}[Proof of Proposition \ref{thm: g_22}]
According to the definition of $g_2$, we can directly get the condition (ii) for $SA_1$. Now we prove the conditions (i) and (iii) by the following two steps:

\textbf{Step 1}: Show that $g_{2}(\boldsymbol{x})=g_{2}(\boldsymbol{x}+\boldsymbol{v})$, for any $\boldsymbol{x}\in\mathbb{R}^{2}$ and $\boldsymbol{v}\in \Lambda_{2}$.

Denote $\boldsymbol{x}=[x_{1},x_{2}]^{\mathsf{T}}.$ For any $k_1,k_2\in 2\mathbb{Z}$, we have
\begin{align*}
g_{2}(\boldsymbol{x}+k_{1}\boldsymbol{v}_{1}+k_{2}\boldsymbol{v}_{2}) & =g_{2}([x_{1}+k_{1}+\frac{1}{2}k_{2},x_{2}+\frac{\sqrt{3}}{2}k_{2}]^{\mathsf{T}})\\
 & =1+\cos(\boldsymbol{a}_{1}\bm{\cdot} \boldsymbol{x}+\pi k_1)+\cos(\boldsymbol{a}_{2}\bm{\cdot} \boldsymbol{x}+\pi k_2)+\cos(\boldsymbol{a}_{3}\bm{\cdot} \boldsymbol{x}+\pi (k_1+k_2))\\
 & =g_{2}(\boldsymbol{x}).
\end{align*}
It leads to the conclusion.

\textbf{Step 2}: Prove that $g_{2}(\boldsymbol{x})\leq0$, when $\|[\boldsymbol{x}]\|_{\mathbb{R}^2/\Lambda_{2}}\geq 1$.

Denote $\boldsymbol{u}_1=[\pi,0]^{\mathsf{T}}$, $\boldsymbol{u}_2=[0,\pi]^{\mathsf{T}}$ and $\boldsymbol{u}_3=[\pi,\pi]^{\mathsf{T}}$, and take the function $f:\mathbb{R}^{2}\rightarrow\mathbb{R}$ as
\[
f(\boldsymbol{x})=1+\cos(\boldsymbol{u}_1\bm{\cdot} \boldsymbol{x})+\cos(\boldsymbol{u}_2\bm{\cdot} \boldsymbol{x})+\cos(\boldsymbol{u}_3\bm{\cdot} \boldsymbol{x}).
\]
For any fixed $\boldsymbol{x}\in \mathbb{R}^2$, take  $\widetilde{\boldsymbol{x}}=\boldsymbol{A}\boldsymbol{x}$, where $\boldsymbol{A}=\begin{bmatrix}1 & -\frac{1}{\sqrt{3}}\\
0 & \frac{2}{\sqrt{3}}
\end{bmatrix}$. Thus
\[
g_{2}(\boldsymbol{x})=f\left(\widetilde{\boldsymbol{x}}\right).
\]
{Meanwhile,  if $\|[\boldsymbol{x}]\|_{\mathbb{R}^2/\Lambda_2}\geq 1$, then $\boldsymbol{x}$ can be rewritten as
\[
\boldsymbol{x}=\boldsymbol{x}_0+\boldsymbol{v},
\]
for certain $\boldsymbol{v}\in \Lambda_2$ and certain $\boldsymbol{x}_0\in \mathcal{P}(\Lambda_2)-(\boldsymbol{v}_1+\boldsymbol{v}_2)$ with $\|\boldsymbol{x}_0\|_2\geq 1$.
Thus $\widetilde{\boldsymbol{x}}=\boldsymbol{A}\boldsymbol{x}=\boldsymbol{A}(\boldsymbol{x}_0+\boldsymbol{v})=\widetilde{\boldsymbol{x}}_0+\widetilde{\boldsymbol{v}}$,  where $\widetilde{\boldsymbol{v}}\in 2\mathbb{Z}^2$ and $\widetilde{\boldsymbol{x}}_0=[\widetilde{x}_{0,1},\widetilde{x}_{0,2}]^{\mathsf{T}}$ such that $\|\boldsymbol{A}^{-1}\widetilde{\boldsymbol{x}}_0\|_2\geq 1$, $-1\leq \widetilde{x}_{0,1}<1$ and $-1\leq \widetilde{x}_{0,2}<1$. By some direct calculations, we have  $|\widetilde{x}_{0,1}+\widetilde{x}_{0,2}|\geq 1$. Then $\widetilde{\boldsymbol{x}}\in \mathcal{D}$,
where $\mathcal{D}$ is defined in (\ref{eq: X}).
According to Lemma \ref{lem: technical}, we have $f(\widetilde{\boldsymbol{x}})\leq0$,
for any $\widetilde{\boldsymbol{x}}\in\mathcal{D}$. Therefore,   for any $\boldsymbol{x}$ such that $\|[\boldsymbol{x}]\|_{\mathbb{R}^2/\Lambda_2}\geq 1$, we
have $g_{2}(\boldsymbol{x})\leq0$.}

Furthermore, by Definition \ref{param} and Remark \ref{g remark}, we have
\begin{equation}
g_2^{\#}=\frac{g_2(\boldsymbol{0})}{\widehat{g}_2 (\boldsymbol{0})}=\frac{g_{2}(\boldsymbol{0})}{{c}_{0}|\Lambda_{2}|}=\frac{2}{\sqrt{3}}.\label{eq: delta_upper}
\end{equation}
The proof is completed. 
\end{proof}

\subsection{Proof of Proposition \ref{prop: 3dim}}\label{dim_3: sec}
\begin{proof}[Proof of Proposition \ref{prop: 3dim}]
According to the definition of $g$, we can directly get the condition (ii) for $SA_1$. Now we prove the conditions (i) and (iii) by the following two steps:

\textbf{Step 1}:  Show that $g(\boldsymbol{x})=g(\boldsymbol{x}+\boldsymbol{v})$, for any $\boldsymbol{x}\in \mathbb{R}^2$ and $\boldsymbol{v}\in \Lambda$. 

Denote $\boldsymbol{x}=[x_{1},x_{2},x_{3}]^{\mathsf{T}}.$ For any $k_1,k_2,k_3\in \mathbb{Z}$, we have 
\begin{align*}
g(\boldsymbol{x}+k_{1}\boldsymbol{v}_{1}+k_{2}\boldsymbol{v}_{2}+k_{3}\boldsymbol{v}_3) & =g\left([x_{1}+\sqrt{2}k_1,
x_{2}+\sqrt{2}k_2,
x_{3}+\sqrt{2}k_3]^{\mathsf{T}}\right)\\
 & =1+\cos(\boldsymbol{a}_{1}\bm{\cdot} \boldsymbol{x}+2\pi k_1)+\cos(\boldsymbol{a}_{2}\bm{\cdot} \boldsymbol{x}+2\pi k_2)+\cos(\boldsymbol{a}_{3}\bm{\cdot} \boldsymbol{x}+2\pi k_3)\\
 & =g(\boldsymbol{x}).
\end{align*}
It leads to the conclusion. 

\textbf{Step 2:} Prove that $g(\boldsymbol{x})\leq 0$, when $\|[\boldsymbol{x}]\|_{\mathbb{R}^3/\Lambda}\geq 1$.

 If $\|\boldsymbol{x}\|_{\mathbb{R}^3/\Lambda}\geq 1$, then $\boldsymbol{x}=[x_1,x_2,x_3]^{\mathsf{T}}$ can be rewritten as 
\[
\boldsymbol{x}=\boldsymbol{x}_0+\boldsymbol{v},
\]
where $\boldsymbol{x}_0\in [-\frac{\sqrt{2}}{2},\frac{\sqrt{2}}{2}]^3$ such that $\|\boldsymbol{x}_0\|_2\geq 1$ and $\boldsymbol{v}\in \Lambda$. 

Take $\boldsymbol{u}=[u_1,u_2,u_3]^{\mathsf{T}}$, where $u_1=\sqrt{2}\pi x_1$, $u_2=\sqrt{2}\pi x_2$ and $u_3=\sqrt{2}\pi x_3$. Then $g$ can be rewritten as $\widetilde{g}$, that is, 
\[
\widetilde{g}(\boldsymbol{u})=1+\cos(u_1)+\cos(u_2)+\cos(u_3). 
\]
According to the definition of $\boldsymbol{x}$, $\boldsymbol{x}_0$ and $\cos(z)=\cos(-z)$ for any $z\in \mathbb{R}$, in order to get the conclusion, it is sufficient to prove that $\widetilde{g}(\boldsymbol{u})\leq 0$, when $
\boldsymbol{u}\in \{\boldsymbol{u}\in [0,\pi]^3\ :\ \|\boldsymbol{u}\|_2\geq \sqrt{2}\pi\}. 
$
Since $\widetilde{g}$ is a continuous function, it is enough to prove that, when 
\[
\boldsymbol{u}\in \{\boldsymbol{u}\in [0,\pi]^3\ :\ \|\boldsymbol{u}\|_2> \sqrt{2}\pi\},
\]
we have $\widetilde{g}(\boldsymbol{u})\leq 0$. 
If we can get that 
\begin{equation}\label{eqn: belong}
\small
\{\boldsymbol{u}\in [0,\pi]^3\ :\ \|\boldsymbol{u}\|_2> \sqrt{2}\pi\}\subset \{\boldsymbol{u}\in [0,\pi]^3\ :\ \|\boldsymbol{u}\|_1> 2\pi\},
\end{equation}
it is enough to show $\widetilde{g}(\boldsymbol{u})\leq 0$, when $\boldsymbol{u}\in [0,\pi]^3$ and $\|\boldsymbol{u}\|_1\geq 2\pi$. In this case, as  $\pi \geq u_3\geq 2\pi -u_1-u_2$,  we have 
\[
\begin{split}
\widetilde{g}(\boldsymbol{u})=&1+\cos(u_1)+\cos(u_2)+\cos(u_3)\\
\leq & 1+\cos(u_1)+\cos(u_2)+\cos(2\pi-u_1-u_2)\\
=&1+\cos(u_1)+\cos(u_2)+\cos(u_1+u_2),
\end{split}
\]
where $0\leq u_1,u_2\leq \pi$ and $u_1+u_2\geq \pi$.  Taking $u_1=\pi x$ and $u_2=\pi y$ in Lemma \ref{lem: technical}, we can directly get that $\widetilde{g}(\boldsymbol{u})\leq 0$, which meets the conclusion. 

Now the only thing left is to prove (\ref{eqn: belong}). It is equivalent to show that when $\boldsymbol{u}\in \Omega:=\{\boldsymbol{u}\in [0,\pi]^3\ :\  \|\boldsymbol{u}\|_1\leq 2\pi\}$, then $\|\boldsymbol{u}\|_2\leq \sqrt{2}\pi$ holds true. Take $\boldsymbol{u}_0=[0,0,0]^{\mathsf{T}}$, 
\[
 \boldsymbol{u}_1=[\pi,\pi,0]^{\mathsf{T}}, \ \boldsymbol{u}_2=[\pi,0,\pi]^{\mathsf{T}}, \  \boldsymbol{u}_3=[0,\pi,\pi]^{\mathsf{T}}, \  \boldsymbol{u}_4=[\pi,0,0]^{\mathsf{T}}\  \boldsymbol{u}_5=[0,\pi,0]^{\mathsf{T}}\  \text{and}\ \boldsymbol{u}_6=[0,0,\pi]^{\mathsf{T}}.
\]
Since $\|\boldsymbol{u}_i\|_2\leq \sqrt{2}\pi$, $i=0,\ldots,6$, and the convex hull of $\{\boldsymbol{u}_i\}_{i=0}^{6}$ is $\Omega$, we can immediately arrive at the result.

Furthermore, by Definition \ref{param} and Remark \ref{g remark}, we have
\begin{equation}
g^{\#}=\frac{g(\boldsymbol{0})}{\widehat{g} (\boldsymbol{0})}=\frac{g(\boldsymbol{0})}{{c}_{0}|\Lambda|}=\sqrt{2}.\end{equation}
The proof is completed.
\end{proof}

\end{document}